\theoremstyle{thmstyleone}%
\newtheorem{theorem}{Theorem}
\newtheorem{proposition}[theorem]{Proposition}
\newtheorem{corollary}[theorem]{Corollary}
\newtheorem{definition}[theorem]{Definition}
\newtheorem{remark}[theorem]{Remark}
\newcommand{\op}[1]{\operatorname{#1}}
\newcommand{\dgm}{\op{dgm}}
\newcommand{\bcd}{\op{bcd}}
\newcommand{\dis}{\op{dis}}
\newcommand{\dB}{\op{d_B}}
\newcommand{\dH}{\op{d_{H}}}
\newcommand{\dGH}{\op{d_{GH}}}
\newcommand{\diag}{\op{diag}}
\newcommand{\SW}{\op{SW}}
\renewcommand{\mp}{\op{mp}}
\newcommand{\score}{\op{score}}
\newcommand{\tr}{\op{tr}}
\newcommand{\norm}[1]{\left\lVert#1\right\rVert}
\newcommand{\vc}{\mathbf{c}}
\newcommand{\vR}{\mathbb{R}}
\newcommand{\vN}{\mathbb{N}}
\newcommand{\minDL}{\texttt{minDL}}
\newcommand{\tol}{\texttt{tol}}
\begin{document}

\title[Article Title]{A Stable Measure of Similarity for Time Series using Persistent Homology}
\author[1]{\fnm{Bala} \sur{Krishnamoorthy}}\email{kbala@wsu.edu}
\author*[1]{\fnm{Elizabeth} \sur{Thompson}}\email{elizabeth.thompson1@wsu.edu}
\affil{
	\orgdiv{Department of Mathematics and Statistics},
	\orgname{Washington State University},
	\orgaddress{
		\street{14204 NE Salmon Creek Ave},
		\city{Vancouver},
		\state{Washington},
		\postcode{98686},
		\country{United States}
	}
}


\abstract{
Persistent homology, the study of holes that appear in data as one thickens balls centered around its points over time, has theoretically guaranteed stability. 
That is, small data perturbations guarantee small changes in the lifetimes of these holes. 
This stability has been used to construct a measure of periodicity for a single univariate time series, denoted the sliding windows and 1-persistence scoring function, $\score(f_1)$. 
Along with periodicity analysis of single time series, similarity between a pair of time series has also been studied. 
One popular measure of similarity between two time series is percent determinism (\%DET), which measures the correlation between two time-series embeddings. 
Percent determinism relies on Euclidean distance alone, which makes it less robust to small time-series perturbations in practice. 
Motivated by the stability of persistent homology, coupled with the persistence-based periodicity scoring function $\score(f_1)$, we introduce a novel persistent-homology based measure of time-series similarity which we denote the bi-conditional periodicity score, $\score(f_1,f_2)$. 
We prove the stability of our measure under small time series and frequency perturbations, as well as the existence of a minimum embedding dimension for the convergence of our score. 
Our latter result implies that larger embedding dimensions may be necessary to reach desired levels of convergence. 
Since pairwise distances between points in these larger dimensions may start to concentrate, we also prove the stability of our measure under dimension reduction which guarantees that as long as the first $K$ principal components capture a majority of the variance under orthogonal projection, the score will undergo small changes. 
We next introduce an algorithm for computing the bi-conditional periodicity score and deduce its computational complexity as $O(N \log N + PK^2 + P^6)$ for $N$ the number of time series points, $P$ the number of embedding points, and $K$ the number of principal components.  
We experimentally verify the greater stability of our measure in comparison with \%DET on both synthetic time series as well as real climate data. 
Along with the greater stability of our measure to \%DET, $\score(f_1,f_2)$ also requires only one parameter for its computation while \%DET requires four.
}

\keywords{time series, persistent homology, PCA}

\maketitle

\section{Introduction}\label{sec:intro}

Time series analysis is a well-studied field. 
One time-series feature commonly studied is the extraction of its dominant frequency components. 
One of the earliest numerical methods introduced for estimating the frequency content of a univariate signal is the Fourier Transform \cite{bracewell2000}, which quantifies the strength of a time series at a given frequency.
Frequency is commonly measured in Hertz, or cycles per second.
Given a continuous, real-valued time series $f(t):T \subseteq \vR \rightarrow \vR$ and a frequency $w$, the continuous Fourier Transform of $f$ at $w$ is the area under the product of $f$ with a purely sinusoidal signal at frequency $w$, given by 

\[ CFT[f](w) = \int_T f(t) \cdot e^{- i 2\pi w t} dt,\] 

\noindent where $e^{-i2\pi wt} = \cos(2\pi w t) - i \sin(2\pi w t)$.
For discrete signals, this transform simply computes the dot product of $f(t)$ with purely sinusoidal signal.
Intuitively, larger values of $CFT[f](w)$ indicate greater presence of this frequency in $f$.
Alternative transforms have also been introduced that are more suited for studying nonstationary series (i.e. those whose frequencies change with time) or series with wavelet structure (i.e. seismic waves), such as the Short Time Fourier Transform \cite{752051} and the Wavelet Transform \cite{russell2016jean}.
A newer method has also been introduced that can analyze the frequency content of a time series by measuring correlation between the sliding windows embeddings $\SW_{M,\tau}f(t)$ and $\SW_{M,\tau}e^{i 2\pi w t}$ of a time series $f$ and a reference signal $e^{i 2\pi w t}$ given an embedding dimension $M \in \vN$ and a time lag $\tau \in \vR$. 
Denoted as cross-recurrence quantification analysis \cite{coco2014cross}, this method summarizes all pairs of points that cross-recur (i.e. are close enough in Euclidean distance) between $\SW_{M,\tau}f(t)$ and $\SW_{M,\tau}e^{i 2\pi w t}$ in a binary matrix $\mathcal{C}$. 
The proportion of cross-recurring states  (i.e. the proportion of ones in $\mathcal{C}$) is denoted as percent recurrence (\%REC), given by

\[ \%\text{REC}[f,e^{- i 2\pi w t}] = \frac{1}{|C|} \sum_{ij} \delta_{\tol}(C[i,j]), \]

\noindent where $\delta_\tol (C[i,j]) = 1$ if $||\SW_{M,\tau}f(t_i) - \SW_{M,\tau}e^{i 2\pi w t_j}|| \leq \tol$ and $|C|$ denotes the total number of points in $C$.
This measure essentially quantifies the correlation between the embeddings of $f$ and $e^{- i 2\pi w t}$. 
Further, the proportion of cross-recurring states that exist consecutively for at least $\minDL$ time steps (i.e. the proportion of ones in $\mathcal{C}$ that belong to a diagonal of at least $\minDL$ ones) denotes the percent determinism (\%DET), given by 

\[ \%\text{DET}[f,e^{- i 2\pi w t}] = \frac{1}{|C|} \sum_{ij} \eta_\minDL \left( \delta_\tol(C[i,j]) \right),  \]

\noindent where $\eta_\minDL \left( \delta_\tol(C[i,j]) \right) = 1$ if $\delta_\tol(C[i,j]) = 1$ and $C[i,j]$ is in a diagonal of at least $\minDL$ ones.

While the Fourier Transform requires less steps for frequency analysis when compared to \%REC, \%REC has the added capability of estimating the correlation between two time series embeddings.
In practice, \%REC is less robust to added noise as it relies on Euclidean distance alone.
We discuss these limitations in more detail in the next section.

\section{Limitations}\label{sec:limitations}

As previously mentioned, the Fourier Transform and cross-recurrence quantification are both tools we can use to study the frequency content of a time series, however in practice \%REC and \%DET are less robust to added data noise.
Unlike the Fourier Transform, both recurrence measures have the additional capability of quantifying time-series similarity.
As we'd naturally expect real time series data to contain some inherent noise, a measure that is more robust is necessary. 
In Figure \ref{fig:instability_FT_DET}, we show an example of the instability of \%REC and \%DET on a synthetic sinusoidal with added Gaussian noise. 
When just increasing the noise in $f$ by 5\%, determinism drops by nearly 20\% and recurrence drops by nearly 2\%.

Persistent homology, the study of holes in different dimensions that appear in point cloud data upon the thickening of their points over time, has become a popularized method for time series analysis and prediction \cite{ichinomiya2025machine,10.1063/5.0102421}.
Part of its growing popularity is due to its guaranteed stability to small amounts of added Gaussian noise to time series, which we discuss in more detail below.

\begin{figure}[ht!]
\centering
\includegraphics[width=0.95\textwidth]{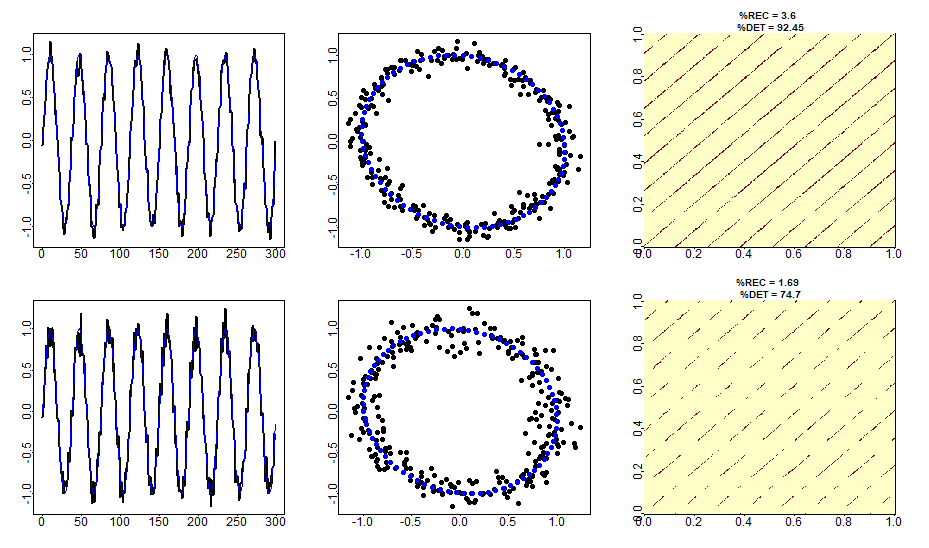}
\caption{The instability of percent recurrence and percent determinism when subject to added time-series noise.
For each reference signal $f_2(t) = \sin(2 \pi (3/300) t)$ (in blue), we define its noisy version (in black) as $f_1(t) = f_2(t)  + \mathcal{N}(0,\sigma \cdot \text{sd}(f_2(t)))$, where $\text{sd}(f_2(t))$ denotes the standard deviation of $f_2(t)$.
We vary $\sigma$ beteen  $0.1$ (top) and $\sigma = 0.15$ (bottom).
We select the embedding dimension ($M = 1$) and the time lag ($\tau = 10$) using the first minimum of the automutual information \cite{wallot2018analyzing}.
We fix the distance threshold to be 20\% of the maximum pairwise distance between the embeddings of $f_1$ and $f_2$.
We finally fix the minimum number of points to qualify a diagonal to $\minDL = 5$.}
\label{fig:instability_FT_DET}
\end{figure}

\section{Related Work}\label{sec:related work}

Given a set of point cloud data $X$ in Euclidean space, the Vietoris-Rips complex (Definition \ref{def:VR_cmplx}) of $X$ at radius $\epsilon$, denoted $\mathcal{R}_\epsilon(X)$, is a triangulation of $X$ in which any two of its points $x_i, x_j$ forms an edge $[x_i x_j]$ when $||x_i - x_j||\leq 2 \epsilon$, and any three edges form a triangle, any four triangles form a tetrahedron, and so on. 
The nested sequences of subcomplexes formed when we increase the filtration radius $\epsilon$ over time forms the Vietoris-Rips filtration of $X$ (Definition \ref{def:VR_filtrn}).
Given a continuous and tame real-valued function $f$, the sublevel-set of $f$ at level $\lambda$, denoted $\mathcal{S}_\lambda(f)$, is the set of all points in $f$ whose value is at most $\lambda$.
The nested sequence of sublevel-sets of $f$ formed as we increase $\lambda$ over time forms the sublevel-set filtration of $f$.
The birth and death radii of all holes in various dimensions that are formed during either Vietoris-Rips or sublevel-set filtration (i.e. the persistent homology of $X$ or $f$) can be summarized as ordered pairs in a persistence diagram (Definition \ref{def:dgm}) or equivalently as intervals in a persistence barcode (Definition \ref{def:bcd}).
One favorable result in persistent homology is its guaranteed theoretical stability to small data perturbations. 
That is, given two point clouds $X$ and $Y$ in Euclidean space, if the Hausdorff distance (Definition \ref{def:Hausdorf_dist}) between them is small, then the Bottleneck distance (Definition \ref{def:Bottleneck_dist}) between their persistence diagrams from Vietoris-Rips filtration will also be small \cite{CoEdHa2007,perea2015}.
As well, given two tame and continuous functions $f$ and $g$, if the infinity-norm between them is small, then the Bottleneck distance between their persistence diagrams from sublevel-set filtration will also be small \cite{cohen2007}.
These stability results are formally given below by 

\begin{equation}\label{eq:VR_stability_PH}
    \dB(\dgm(\mathcal{R}(X)),\dgm(\mathcal{R}(Y))) \leq 2 \dH(X,Y) \text{ and}
\end{equation}

\begin{equation}\label{eq:sublevelset_stability_PH}
    \dB(\dgm(\mathcal{S}(f)),\dgm(\mathcal{S}(g))) \leq 2 ||f-g||_\infty,
\end{equation}

\noindent This guaranteed stability in part motivates the increasing usage of persistent homology for analysis of time series.
In 2015, a persistence-based measure was introduced that quantifies how periodic a single univariate time series is \cite{perea2015}.
Termed the Sliding Windows and 1-Persistence Scoring Function (denoted $\score(f)$), this measure was later used to quantify periodicity in gene expressions data \cite{SW1Pers_published_}.
This measure works by first embedding a univariate time series $f$ via and embedding dimension and a time lag to obtain a sliding windows embedding, denoted by 

\[ \SW_{M,\tau}f(t) = (f(t),f(t+\tau),\dots,f(t+M\tau)), \]

\noindent where $M \in \vN$ and $\tau \in \vR$ denote the dimension and time lag. 
Vietoris-Rips filtration is then performed on this embedding to obtain a persistence diagram $\dgm(\SW_{M,\tau}f(T))$.
The normalized lifetime of the longest-surviving one dimensional hole, denoted $\mp(\dgm_1(\SW_{M,\tau}f(t)))$ during this filtration is then used to quantify how periodic $f$ is.
The authors of \cite{perea2015} ultimately are able to use (1) to prove stability of  $\score(f)$ under sufficiently close enough Fourier series approximations of $f$, denoted $S_Nf(t)$ (i.e. for sufficiently large enough degrees of truncation $N \in \vN$).
They are also able to prove that for sufficiently periodic time series data, the sliding windows embeddings will be roundest when the sliding window used has width proportional to the period of $f$. 
This latter finding could prove very useful when trying to determine the degree to which a frequency component appears in a signal with suspected seasonality (i.e. weather or climate data), which could in turn be useful for constructing such a persistence-based measure with guaranteed theoretical stability as a robust alternative to \%DET and \%REC. 
In the remainder of this paper, we introduce such a persistence-based measure, theoretical stability results for the same, and apply it on various synthetic series and real time series with suspected seasonality, ultimately showing that our measure consistently outperforms \%DET and \%REC. 

\section{Contributions and Organization}\label{sec:contributions_organization}

In this paper, we use the stability of persistent homology to construct a measure of time-series similarity with theoretical stability. 
We are further able to show its greater experimental stability in comparison with the commonly used measure of percent determinism on both synthetic and real data.
Our novel measure has the additional advantage of only requiring one parameter for its computation when compared to the four required of percent determinism.
Given a single time series $f_1$ and a reference sinusoidal $f_2$ with frequency $w_2$ Hertz, we first define a conditional periodicity score, $\score(f_1|f_2)$, which can be used to quantify the extent to which the frequency $w_2$ appears in $f_1$. 
We compare this measure to spectral magnitudes produced via Fourier analysis on synthetic (Figure \ref{fig:compare_condtional_score_stability_synthetic}) and real (Figure \ref{fig:freq_distn_comparison}) data.
We then extend this notion in order to measure the similarity between a pair of series $f_1$ and $f_2$, where $f_2$ need not be a pure sinusoidal. 
Given an embedding dimension $M \in \vN$, we define a conditional sliding window embedding of $f_1$ given $f_2$, which uses the estimated period of $f_2$ via Fourier analysis to determine the sliding window size when embedding $f_1$.
Our measure is then computed by normalizing lifetime of the longest-surviving 1-dimensional hole from Vietoris-Rips filtration on this conditional embedding, which produces the conditional periodicity score $\score(f_1|f_2)$. 
We then repeat this process to compute $\score(f_2|f_1)$.
Taking the average of both scores produces our novel similarity measure, the bi-conditional periodicity score $\score(f_1,f_2)$.
We are able to prove under which conditions $\score(f_1|f_2)$ reduces to $\score(f_1)$ (Proposition \ref{prop:conditional_score_approaches_score}), as well as the stability of $\score(f_1|f_2)$ to small time-series and frequency perturbations (Theorems \ref{thm:stability_conditional_score_change_f2} and \ref{thm:stability_conditional_score_change_f1}).
We then leverage these stability results to prove the stability of our bi-conditional periodicity score (Theorem \ref{thm:stability_biconditional_score}).
We are then able to prove the existence of a minimum embedding dimension which controls how close the conditional periodicity score is to its underlying value (Theorem \ref{thm:min_M_for_conditional_score}). 
The latter result suggests that sufficiently large dimensions may be required to acheive desired convergence of $\score(f_1|f_2)$, which motivates us to prove the stability of both $\score(f_1|f_2)$ and $\score(f_1,f_2)$ under dimension reduction (Theorem \ref{thm:stability_conditional_score_und_PCA} and Corollary \ref{cor:stability_biconditional_score_und_PCA}).
We then construct an algorithm for computing the conditional periodicity score (Algorithm \ref{alg:pseudocode}) and deduce its computational complexity.
Following this, we use our algorithm to show that our bi-conditional periodicity score is experimentally more stable under small time-series perturbations on both synthetic and real climate data when compared to percent determinism (Figures \ref{fig:comparing_stability_biconditional_score_REC} and \ref{fig:stability_results_climate_data}).

The remainder of this paper is organized as follows.
We begin with definitions to know regarding persistent homology, distance metrics for proving stability results, and our novel similarity measure (Section \ref{sec:def}).
We then provide our theoretical stability results in Section \ref{sec:stability}, followed by our experimental results in Section \ref{sec:exp_results}.
Lastly, we provide a discussion of our results and future directions we'd like to take this project in Section \ref{sec:discussion_future}.

\section{Definitions}\label{sec:def}

\subsection{Persistent Homology}

Here we introduce standard definitions of topological features used in this paper. 
More information can be found in \cite{Munkres1984,OtPoTiGrHa2017}.

\begin{definition}
    [Vietoris-Rips Complex]\label{def:VR_cmplx}
    Given a point cloud $X = \{x_1,x_2,\dots,x_n\} \in \vR^d$ and a filtration radius $\epsilon > 0$, the Vietoris-Rips complex of $X$ at $\epsilon$ is given by the collection of simplices below

    \[ \mathcal{R}_\epsilon(X) = \{ \sigma = [x_i \dots x_j] : i,j \in 1 \dots n; ||x_s - x_k|| \leq 2 \epsilon, s,k \in 1 \dots n \}. \]
\end{definition}

\begin{definition}
    [Vietoris-Rips Filtration]\label{def:VR_filtrn}
    Given a collection of increasing filtration radii $0 = \epsilon_1 < \epsilon_2 < \dots < \epsilon_r < \dots$ applied to a point cloud $X$, the collection of nested subcomplexes 

    \[ X \subset \mathcal{R}_{\epsilon_2}(X) \subset \dots \subset \mathcal{R}_{\epsilon_r}(X) \subset \dots \]

    \noindent denotes the Vietoris-Rips filtration of $X$, given by $\mathcal{R}(X)$.
\end{definition}



%


\begin{definition}
    [Persistence Diagram]\label{def:dgm}
    Given Vietoris-Rips filtration on a point cloud $X$, the collection of birth and death radii for each $p$-dimensional hole that appears can be summarized as ordered pairs in a persistence diagram given below by 

    \[ \dgm(\mathcal{R}(X)) = \{ (b_i,d_i):i=1,\dots,m_p,p=0,1,2,\dots\}. \]


    
    \noindent Unless otherwise stated, we simplify this notation to $\dgm(X)$, denoting $\dgm_p(X)$ as the collection of all ordered pairs for one dimension $p$ from $\dgm(X)$.
\end{definition}

\begin{definition}
    [Persistence Barcode]\label{def:bcd}
    Given Vietoris-Rips filtration on a point cloud $X$, the collection of birth and death radii for each $p$-dimensional hole that appears can be summarized as intervals in a persistence barcode given  below by

    \[ \bcd(\mathcal{R}(X)) = \{ [b_i,d_i):i=1,\dots,m_p,p=0,1,2,\dots\}. \]



    \noindent Unless otherwise stated, we simplify this notation to $\bcd(X)$, denoting $\bcd_p(X)$ as the collection of all intervals for one dimension $p$ from $\bcd(X)$.
\end{definition}

\subsection{Distance Metrics}

Here we introduce standard definitions of distances used in this paper.
See, for instance, the book by Burago, Bugaro, and Ivanov~\cite{BuBuIv2001} for details. 



\begin{definition}[Hausdorff Distance]\label{def:Hausdorf_dist}
  Given two sets of points $X$ and $Y$ in a common metric space, the Hausdorff distance between them is given by 
  \[\dH(X,Y) = \inf{\{\epsilon>0:X \subseteq Y^{\epsilon}, Y \subseteq X^{\epsilon}\}}, ~\text{ where }
  X^{\epsilon} = \underset{x \in X}{\bigcup} B_{\epsilon}(x) \text{ and } Y^{\epsilon}= \underset{y \in Y}{\bigcup} B_{\epsilon}(y)\]
  denote the union of all $\epsilon$-balls centered at each point in either set. 
\end{definition}

\begin{definition}[Hausdorff Definition of Gromov-Hausdorff Distance]\label{def:Gromov_Hausdorf_dist_Hausdorf}
  Given two sets of points $X$ and $Y$, the Gromov-Hausdorff distance between them is given by 
  \[ \dGH(X,Y) = \inf{\{\dH(f(X),g(Y)) : f:X\rightarrow S, g:Y\rightarrow S \}}, \]
  where $f$ and $g$ are isometric embeddings of $X$ and $Y$ into a common metric space $S$.
  If $X$ and $Y$ lay in a shared metric space $S$,  then $\dGH(X,Y) \leq \dH(X,Y)$ \cite{adams2024}.
\end{definition}

\begin{definition}[Distortion Definition of Gromov-Hausdorff Distance]\label{def:Gromov_Hausdorf_dist_distortion}
  An alternative definition of the Gromov-Hausdorff distance between two sets of points $X$ and $Y$ is given by 
  \[ \dGH(X,Y) = \frac{1}{2} \inf{\{\dis(R):R:X\rightarrow Y} \in \mathcal{R}(X,Y)\}, \]
  where $R$ is a relation between $X$ and $Y$ whose distortion is defined by 
  \[ \dis(R) = \sup \{|d_X(x,x') - d_Y(y,y')| \,:\, (x,y), (x',y') \in R\},\]
  where $d_{X}$ and $d_{Y}$ are the corresponding metrics for $X$ and $Y$, respectively. 
\end{definition}

\begin{definition}[Bottleneck Distance]\label{def:Bottleneck_dist}
  Given two finite sets of points $X$ and $Y$, let $\dgm(X)$ and $\dgm(Y)$ denote the persistence diagrams of a chosen dimension obtained from the Vietoris-Rips (VR) filtration on $X$ and $Y$, respectively.
  Then the Bottleneck distance between $\dgm(X)$ and $\dgm(Y)$ is given by 
  
  \[ \dB(\dgm(X),\dgm(Y)) = \underset{\phi}{\inf}\,\underset{x}{\sup} \norm{x - \phi(x)}_{\infty}, \]
  
  \noindent where $\phi:\dgm(X) \rightarrow \dgm(Y)$ denotes a bijection between $\dgm(X)$ and $\dgm(Y)$, including points along the diagonal in either diagram when they both do not share the same cardinality.
  
\end{definition}

\subsection{The Conditional Periodicity Scoring Function}

Here we denote the definitions for our new scoring function.

\begin{definition}
    [Conditional Sliding Windows Embedding]\label{def:conditional_SWE}
    Given a continuous univariate time series $f_1:T \subseteq \vR \rightarrow \vR$ and a reference series $f_2$ with a frequency of $w_2 = \arg\max_w \left\vert \int_T f_2(t) e^{i 2 \pi w t}dt \right\vert$ Hertz (Hz) (i.e. cycles per second), define the time lag $\tau_{w_2} = \frac{1}{w_2(M+1)}$ for embedding dimension $M \in \vN$. 
    Then the conditional sliding windows embedding of $f_1$ given $f_2$ is given by 

    \[ \SW_{M,\tau_{w_2}} f_{1|2}(t) = (f_1(t),f_1(t+\tau_{w_2}), \dots, f_1(t+M\tau_{w_2})). \]
\end{definition}

\begin{definition}
    [Conditional Periodicity Score]\label{def:conditional_score}
    Denote the lifetime of the longest-surviving 1-dimensional hole from Vietoris-Rips filtration on the conditional sliding windows embedding of a continuous time series $f_1$ given a reference series $f_2$ with frequency $w_2 = \arg\max_w \left\vert \int_T f_2(t) e^{i 2\pi w t}dt \right\vert$ Hz by $\mp(\dgm_1(\SW_{M,\tau_{w_2}}f_{1|2}(T)))$.
    Then the conditional periodicity score of $f_1$ given $f_2$ is given by the normalization of this lifetime below:

    \[ \score(f_1|f_2) = \frac{\mp(\dgm_1(\SW_{M,\tau_{w_2}}f_{1|2}(T)))}{\sqrt{3}}. \]
\end{definition}

\begin{definition}
    [Bi-Conditional Periodicity Score]\label{def:bi_conditional_score}
    Given a continuous univariate time series $f_1 : T_1 \subseteq \vR \rightarrow \vR$ and a reference series $f_{2} : T_2 \subseteq \vR \rightarrow \vR$, denote the frequency of each signal as $w_1 = \arg\max_w \left\vert \int_{T_1}f_1(t) e^{i 2 \pi  w t} dt \right\vert$ and $w_2 = \arg\max_w \left\vert \int_{T_2}f_2(t) e^{i 2 \pi  w t} dt \right\vert$.
    Given time lags $\tau_{w_1} = \frac{1}{w_1(M+1)}$ and $\tau_{w_2} = \frac{1}{w_2(M+1)}$, the bi-conditional periodicity score of $f_1$ and $f_2$ is given by 

    \[ \score(f_1,f_2) = \frac{\score(f_1|f_2) + \score(f_2|f_1)}{2}. \]
\end{definition}

\section{Theoretical Results}\label{sec:stability}

\subsection{Reduction to the Periodicity Score}

Here we discuss a result that clarifies when our conditional periodicity score $\score(f_1|f_2)$ simplifies to the periodicity score of $f_1$ as defined in \cite{perea2015,SW1Pers_published_}.
Specifically, as $\tau_{w_2}$ approaches $\tau_{w_1} = \frac{1}{w_1(M+1)}$, then $\SW_{M,\tau_{w_2}}f_{1|2}(t)$ approaches $\SW_{M,\tau_{w_1}}f_1(t)$ and hence $\score(f_1|f_2)$ approaches $\score(f_1)$. 
We formalize this notion in Proposition \ref{prop:conditional_score_approaches_score} below.

\begin{proposition}[Reduction to Periodicity Score]\label{prop:conditional_score_approaches_score}
Let $f_1,f_2 : \vR \subseteq \vR \rightarrow \vR$ be two continuous and differentiable univariate time series in which $f_1$ has frequency $w_1 = \arg\max_w \left\vert \int_\vR f_1(t)e^{i 2\pi w t}dt \right\vert$ and $f_2$ is a reference sinusoidal with frequency $w_2$.
    Then

\[ \lim_{\substack{\frac{1}{w_2} \rightarrow \frac{1}{w_1}}} \SW_{M,\tau_{w_2}}f_{1|2}(t) = \SW_{M,\tau_{w1}}f_1(t). \]
\end{proposition}

\begin{proof}
  Let $i \in \{1,2,\ldots,M+1\}$ for fixed $M \in \mathbb{N}$.
  Since $f_1$ is continuous and differentiable on $\vR$, it is also continuous and differentiable on any subset of $\vR$.
  Consider the subinterval 
  $I_i=\left[ t + (i-1) \tau_{w_2}, t + (i-1) \tau_{w_1} \right]$.
  Then $f_1$ is also continuous and differentiable on $I_i$.
  By the Mean Value Theorem (MVT), there exists some 
  %
  $c_i \in I_i$ such that 
  \[ f'_1(c_i)\hspace*{-0.03in}\left[\left(t+\frac{(i-1)}{w_1(M+1)}\right)
    \text{\hspace*{-0.03in}$-$\hspace*{-0.03in}}
    \left(t+\frac{(i-1)}{w_2(M+1)}\right)\right]
    \text{\hspace*{-0.03in}$=$\hspace*{0.0in}}
    f_1\hspace*{-0.03in}\left(t+\frac{(i-1)}{w_1(M+1)}\right)-
    f_1\hspace*{-0.03in}\left(t+\frac{(i-1)}{w_2(M+1)}\right)\hspace*{-0.02in}.\]

    Then we get the following equality:
    \begin{align*}
      \norm{\SW_{M,\tau_{w_2}}f_{1|2}(t) - \SW_{M,\tau_{w_1}}f_1(t)}_{2} \hspace*{3.5in}\\
      = \left(\sum_{i=1}^{M+1} \left\vert f_1\left(t+\frac{(i-1)}{w_2(M+1)}\right)-f_1\left(t+\frac{(i-1)}{w_1(M+1)}\right)\right\vert^{2}\right)^{\frac{1}{2}} \hspace*{0.23in} \\
      = \left( \sum_{i=1}^{M+1} \vert f_1'(c_i) \vert^2  \left\vert \left(t+\frac{(i-1)}{w_2(M+1)}\right) -\left(t+\frac{(i-1)}{w_1(M+1)}\right) \right\vert^{2}\right)^{\frac{1}{2}} \\
      = \left( \sum_{i=2}^{M+1} \vert f_1'(c_i) \vert^2 \left\vert \frac{i-1}{M+1} \right\vert^2 \left\vert \frac{1}{w_1} - \frac{1}{w_2} \right\vert^2 \right)^{\frac{1}{2}}. \hspace*{1.4in} \qedhere
    \end{align*}

\end{proof}
A direct consequence of Proposition \ref{prop:conditional_score_approaches_score} is that $\score(f_1 | f_2) \rightarrow \score(f_1)$ as $\frac{1}{w_2} \rightarrow \frac{1}{w_1}$.

\subsection{Stability Under Time Series and Frequency Perturbations}
We first prove that given a fixed time series $f_1$, small variations in the frequency of a reference sinusoidal $f_2$ guarantee small changes in the conditional periodicity score of $f_1$ given $f_2$, which intuitively quantifies the extent to which the frequency of $f_2$ is present within $f_1$. 
We prove this result formally in Theorem \ref{thm:stability_conditional_score_change_f2} below. 
We next show the opposite in Theorem \ref{thm:stability_conditional_score_change_f1}. 
That is, we show that under small perturbations of the series $f_1$ the conditional periodicity score of $f_1$ given a reference sinusoidal $f_2$ at a fixed frequency of $w_2$ also undergoes small changes.
We then combine these results to prove the stability of the bi-conditional periodicity score in Theorem \ref{thm:stability_biconditional_score}. 

\begin{theorem}
    [Stability of the Conditional Periodicity Score Under Changes in Reference Frequency]\label{thm:stability_conditional_score_change_f2}
    Let $f_1,f_{21},f_{22} : \vR \subseteq \vR \rightarrow \vR$ be three continuous and differentiable univariate time series with frequencies $w_1 = \arg\max_w \left\vert \int_T f_1(t)e^{i 2\pi w t}dt \right\vert$, where $f_{21}$ and $f_{22}$ are reference sinusoidals with frequencies $w_{21}$, and $w_{22}$.
    Then
    
    \[ |\score(f_1|f_{21}) - \score(f_1|f_{22})| \leq \frac{4}{\sqrt{3}} \sqrt{\frac{M}{M+1}} \left\vert \frac{1}{w_{21}} - \frac{1}{w_{22}} \right\vert \sqrt{\sum_{i=1}^M |f'_1(c_i)|^2}. \]
\end{theorem}

\begin{proof}
Without loss of generality, suppose $w_{21} < w_{22}$. Let $M \in \vN$.
Assume $c_i \in I_i = (t+i\tau_{w_{22}},t+i\tau_{w_{21}})$ for $i = 1,2,\dots,M$.
Then

\begin{align*}
    ||\SW_{M,\tau_{w_{21}}}f_{1|21}(t) - \SW_{M,\tau_{w_{22}}}f_{1|22}(t)||^2 & = \sum_{i=1}^M |f_1(t + i\tau_{w_{21}}) - f_1(t + i\tau_{w_{22}})|^2 \\
    & = \sum_{i=1}^M |f'_1(c_i)|^2 |(t+i\tau_{w_{21}}) - (t+i\tau_{w_{22}})|^2 \text{ by Mean Value Theorem} \\
    & = \frac{M^2}{M+1} \left\vert \frac{1}{w_{21}} - \frac{1}{w_{22}} \right\vert^2 \sum_{I=1}^M |f'_1(c_i)|^2.
\end{align*}

\noindent Then $||\SW_{M,\tau_{w_{21}}}f_{1|21}(t) - \SW_{M,\tau_{w_{22}}}f_{1|22}(t)|| \leq \sqrt{\frac{M}{M+1}} \left\vert \frac{1}{w_{21}} - \frac{1}{w_{22}} \right\vert \sqrt{\sum_{I=1}^M |f'_1(c_i)|^2} \quad (\ast)$.
Let $\epsilon > (\ast)$.
Then by definition, 

\begin{align*}
    \dH \left( \SW_{M,\tau_{w_{21}}}f_{1|21}(T),\SW_{M,\tau_{w_{22}}}f_{1|22}(T) \right) & = \inf \{ \gamma > 0 : ||\SW_{M,\tau_{w_{21}}}f_{1|21}(t) - \SW_{M,\tau_{w_{22}}}f_{1|22}(T)||, \\
    & ||\SW_{M,\tau_{w_{22}}}f_{1|22}(t) - \SW_{M,\tau_{w_{21}}}f_{1|21}(T)|| \leq \gamma \text{, } \forall t \in T \} \\
    & \leq (\ast).
\end{align*}

\noindent Then for $T \subseteq \vR$, by Equation \ref{eq:VR_stability_PH}, $\dB \left( \dgm_1(\SW_{M,\tau_{w_{21}}}f_{1|21}(T)),\dgm_1(\SW_{M,\tau_{w_{22}}}f_{1|22}(T)) \right) \leq 2 (\ast)$.
Let $(b_{21}^{\max},d_{21}^{\max})$ and $(b_{22}^{\max},d_{22}^{\max})$ be the birth and death radii of the longest-surviving 1-dimensional holes from Vietoris-Rips filtration on $\SW_{M,\tau_{w_{21}}}f_{1|21}(T)$ and $\SW_{M,\tau_{w_{22}}}f_{1|22}(T)$.
Then 

\begin{align*}
    \left\vert \mp \left( \dgm_1 \left( \SW_{M,\tau_{w_{21}}}f_{1|21}(T) \right) \right) - \mp \left( \dgm_1 \left( \SW_{M,\tau_{w_{22}}}f_{1|22}(T) \right) \right) \right\vert & = \left\vert (d_{21}^{\max} - b_{21}^{\max}) - (d_{22}^{\max} - b_{22}^{\max}) \right\vert \\
    & \leq |d_{21}^{\max} - d_{22}^{\max}| + |b_{22}^{\max} - b_{21}^{\max}|.
\end{align*}

\noindent Let $\phi: \dgm_1 \left( \SW_{M,\tau_{w_{21}}}f_{1|21}(T) \right) \rightarrow \dgm_1 \left( \SW_{M,\tau_{w_{22}}}f_{1|22}(T) \right)$ be an injection given by $(b_{21}^i,d_{21}^i) \rightarrow (b_{22}^j,d_{22}^j)$.
Then

\begin{align*}
    \dB \left( \dgm_1 \left( \SW_{M,\tau_{w_{21}}}f_{1|21}(T) \right), \dgm_1 \left( \SW_{M,\tau_{w_{22}}}f_{1|22}(T) \right) \right) & = \inf_\phi \sup_i ||(b_{21}^i,d_{21}^i) - \phi((b_{21}^i,d_{21}^i))||_\infty \\
    & = \inf_\phi \sup_i \max \left\{ |b_{21}^i - b_{22}^j|, |d_{21}^i - d_{22}^j| \right\} \\
    & \geq |b_{21}^{\max} - b_{22}^{\max}|, |d_{21}^{\max} - d_{22}^{\max}|.
\end{align*}

\noindent Hence 

\begin{align*}
    |\score(f_1|f_{21}) - \score(f_1|f_{22})| & = \left\vert \mp \left( \dgm_1 \left( \SW_{M,\tau_{w_{21}}}f_{1|21}(T) \right) \right) - \mp \left( \dgm_1 \left( \SW_{M,\tau_{w_{22}}}f_{1|22}(T) \right) \right) \right\vert \\
    & \leq 2 \dB \left( \dgm_1 \left( \SW_{M,\tau_{w_{21}}}f_{1|21}(T) \right), \dgm_1 \left( \SW_{M,\tau_{w_{22}}}f_{1|22}(T) \right) \right) \\
    & \leq 4 \dH \left( \SW_{M,\tau_{w_{21}}}f_{1|21}(T),\SW_{M,\tau_{w_{22}}}f_{1|22}(T) \right) \\
    & \leq \frac{4}{\sqrt{3}} \sqrt{\frac{M}{M+1}} \left\vert \frac{1}{w_{21}} - \frac{1}{w_{22}} \right\vert \sqrt{\sum_{i=1}^M |f'_1(c_i)|^2}.
\end{align*}
\end{proof}

\begin{theorem}
    [Stability of Conditional Periodicity Score Under Changes in Time Series]\label{thm:stability_conditional_score_change_f1}
    Let $f_{11},f_{12} : \vR \rightarrow \vR$ be two continuous and differentiable univariate time series and $f_2$ be a reference sinusoidal with frequency $w_2$ Hz.
    Then 

    \[ |\score(f_{11}|f_2) - \score(f_{12}|f_2)| \leq  4 ||f_{11}-f_{12}||_\infty \sqrt{\frac{M}{3}} . \]
\end{theorem}

\begin{proof}

Let $i = 1, 2, \dots, M$.
Because for all $t \in \vR$, $t + i \tau_{w_2}$ is also a real number, we have the following 

\begin{align*}
    || \SW_{M,\tau_{w_2}}f_{11|2}(t) - \SW_{M,\tau_{w_2}}f_{12|2}(t)||^2 & = \sum_{i=1}^M | f_{11}(t+i\tau_{w_2}) - f_{12}(t+i\tau_{w_2})|^2 \\
    & \leq \sum_{i=1}^M \left( \max_{t \in \vR} \Big\{ |f_{11}(t+i\tau_{w_2}) - f_{12}(t+i\tau_{w_2})| \Big\} \right)^2 \\
    & = \sum_{i=1}^M ||f_{11} - f_{12}||^2_\infty \\
    & = M || f_{11} - f_{12} ||^2_\infty.
\end{align*}

\noindent Then $||\SW_{M,\tau_{w_2}}f_{11|2}(t) - \SW_{M,\tau_{w_2}}f_{12|2}(t)|| \leq \sqrt{M}||f_{11}-f_{12}||_\infty$.
Then by similar arguments as in the proof of Theorem \ref{thm:stability_conditional_score_change_f2}, for some $T \subseteq 
\vR$ we have that 

\begin{align*}
    & \dH(\SW_{M,\tau_{w_2}}f_{11|2}(T), \SW_{M,\tau_{w_2}}f_{12|2}(T)) \leq \sqrt{M} ||f_{11} - f_{12}||_\infty \text{ and} \\
    & \left\vert \mp \left( \dgm_1(\SW_{M,\tau_{w_2}}f_{11|2}(T)) \right) - \mp \left( \dgm_1(\SW_{M,\tau_{w_2}}f_{12|2}(T)) \right) \right\vert \\
    & \hspace{5cm} \leq 2 \dB \left( \dgm_1(\SW_{M,\tau_{w_2}}f_{11|2}(T)) , \dgm_1(\SW_{M,\tau_{w_2}}f_{12|2}(T)) \right)
\end{align*}

\noindent Therefore, by Equation \ref{eq:VR_stability_PH} we have that 

\begin{align*}
    | \score(f_{11}|f_2) - \score(f_{12}|f_{2})| & = \frac{1}{\sqrt{3}} \left\vert \mp \left( \dgm_1(\SW_{M,\tau_{w_2}}f_{11|2}(T)) \right) - \mp \left( \dgm_1(\SW_{M,\tau_{w_2}}f_{12|2}(T)) \right) \right\vert \\
    & \leq \frac{2}{\sqrt{3}} \dB \left( \dgm_1(\SW_{M,\tau_{w_2}}f_{11|2}(T)) , \dgm_1(\SW_{M,\tau_{w_2}}f_{12|2}(T)) \right) \\
    & \leq \frac{4}{\sqrt{3}} \dH(\SW_{M,\tau_{w_2}}f_{11|2}(T), \SW_{M,\tau_{w_2}}f_{12|2}(T)) \\
    & \leq 4 ||f_{11}-f_{12}||_\infty \sqrt{\frac{M}{3}}.
\end{align*}
\end{proof}

\begin{theorem}
    [Stability of the Bi-Conditional Periodicity Score]\label{thm:stability_biconditional_score}
    Let $f_1,f_{11},f_{12},f_2,f_{21},f_{22} : \vR \rightarrow \vR$ be three continuous and differentiable univariate time series with frequencies given by $w_k = \arg\max_w \left\vert \int_{\vR} f_k(t) e^{i 2\pi w t}dt \right\vert$ for $k=1,11,12,2,21,22$.
    Without loss of generality, suppose that $w_{21} < w_{22}$ and $w_{11} < w_{12}$.
    Then given some $c_i \in I_i = (t+i\tau_{w_{22}},t+i\tau_{w_{21}})$ and $d_i \in J_i = (t+i\tau_{w_{12}},t+i\tau_{w_{11}})$, we have that

    \[ |\score(f_1,f_{21}) - \score(f_1,f_{22})| \leq  \frac{2}{\sqrt{3}} \left( \sqrt{\frac{M}{M+1}} \left\vert \frac{1}{w_{21}} - \frac{1}{w_{22}} \right\vert \sqrt{\sum_{i=1}^M |f_1'(c_i)|^2} + ||f_{21} - f_{22}||_\infty \sqrt{M} \right) \text{ and} \]

    \[ |\score(f_{11},f_2) - \score(f_{12},f_2)| \leq \frac{2}{\sqrt{3}} \left( \sqrt{\frac{M}{M+1}} \left\vert \frac{1}{w_{11}} - \frac{1}{w_{12}} \right\vert \sqrt{\sum_{i=1}^M |f_2'(d_i)|^2} + \sqrt{M} ||f_{11} - f_{12}||_\infty  \right). \]
\end{theorem}

\begin{proof}
By Theorem \ref{thm:stability_conditional_score_change_f2}, 

\[ |\score(f_1|f_{21}) - \score(f_1|f_{22})| \leq \frac{4}{\sqrt{3}} \sqrt{\frac{M}{M+1}} \left\vert \frac{1}{w_{21}} - \frac{1}{w_{22}} \right\vert \sqrt{\sum_{i=1}^M|f'_1(c_i)|^2} \quad \text{ and} \]
\[ |\score(f_2|f_{11}) - \score(f_2|f_{12})| \leq \frac{4}{\sqrt{3}} \sqrt{\frac{M}{M+1}} \left\vert \frac{1}{w_{11}} - \frac{1}{w_{12}} \right\vert \sqrt{\sum_{i=1}^M|f'_2(d_i)|^2}. \]

\noindent By Theorem \ref{thm:stability_conditional_score_change_f1}, 

\[ |\score(f_{21}|f_1) - \score(f_{22}|f_1)| \leq 4 ||f_{21} - f_{22}||_\infty \sqrt{\frac{M}{3}} \quad \text{ and} \]
\[ |\score(f_{11}|f_2) - \score(f_{12}|f_2)| \leq 4 ||f_{11} - f_{12}||_\infty \sqrt{\frac{M}{3}}. \]

\noindent Then we have that 

\begin{align*}
    |\score(f_1,f_{21}) - \score(f_1,f_{22})| & = \frac{1}{2} \left\vert \left( \score(f_1|f_{21}) + \score(f_{21}|f_1) \right) - \left( \score(f_1|f_{22}) + \score(f_{22}|f_1) \right) \right\vert \\
    & \leq \frac{1}{2} \left( |\score(f_1|f_{21}) - \score(f_1|f_{22})| + |\score(f_{21}|f_1) - \score(f_{22}|f_1)| \right) \\
    & \leq \frac{1}{2} \left( \frac{4}{\sqrt{3}} \sqrt{\frac{M}{M+1}} \left\vert \frac{1}{w_{21}} - \frac{1}{w_{22}} \right\vert \sqrt{\sum_{i=1}^M f'_1(c_i)|^2} + 4 ||f_{21} - f_{22}||_\infty \sqrt{\frac{M}{3}} \right). 
\end{align*}

\noindent A similar argument proves that 

\[ |\score(f_{11},f_2) - \score(f_{12},f_2)| \leq \frac{1}{2} \left( 4||f_{11}-f_{12}||_\infty \sqrt{\frac{M}{3}} + \frac{4}{\sqrt{3}}\sqrt{\frac{M}{M+1}} \left\vert \frac{1}{w_{11}} - \frac{1}{w_{12}} \right\vert \sqrt{\sum_{i=1}^M |f_2'(d_i)|^2}  \right). \]
\end{proof}

\subsection{A Minimum Embedding Dimension}

For the conditional periodicity score, the only parameter required is the embedding dimension $M \in \vN$.
We are hence motivated to identify a minimum dimension required to optimize the performance of $\score(f_1|f_2)$, which we formalize in Theorem \ref{thm:min_M_for_conditional_score} below.

\begin{theorem}
[A Minimum Embedding Dimension]\label{thm:min_M_for_conditional_score}

Let $\epsilon > 0$.
Suppose $f_1,f_2 : \vR \rightarrow \vR$ are continuous and differentiable univariate time series and the frequency of $f_2$ is given by $w_2 = \arg\max_w \left\vert \int_\vR f_2(t) e^{i 2\pi w t}dt \right\vert$.
Then, any embedding dimension $M_2 > M_1 \geq \mathcal{M}$ $\in$ $\mathbb{N}$ for $\mathcal{M} = \left\lceil{ \frac{1}{w_2\epsilon} }\right\rceil$ guarantees that the change in respective conditional periodicity scores $\score_{M_2}(f_1|f_2)$ and $\score_{M_1}(f_1|f_2)$ are linearly bounded above by $\epsilon$.
\end{theorem}

\begin{proof} 
We first show that $\tau(M) = \frac{1}{w_2(M+1)}$ is a Cauchy sequence of $M$. 
By the definition of $\mathcal{M}$, $\mathcal{M} \geq \frac{1}{w_2\epsilon}$ and hence $\frac{1}{w_{2}\mathcal{M}} \leq \epsilon$ $\implies$ $\frac{1}{w_2} \left(\frac{1}{\mathcal{M}}\right) \leq \epsilon$. 
Since $M_2 > M_1 \geq \mathcal{M}$, $M_2+1 > M_1 + 1 \geq \mathcal{M}+1$ and hence $\frac{1}{M_2+1} < \frac{1}{M_1+1} \leq \frac{1}{\mathcal{M}+1} < \frac{1}{\mathcal{M}}$. 
Thus we get

\begin{align}
  \vert \tau(M_1) - \tau(M_2) \vert & = \frac{1}{w_{2}}\left\vert \frac{1}{M_1+1} - \frac1{M_{2}+1}\right\vert \nonumber \\
  & = \frac{1}{w_2} \left(\frac{1}{M_1+1} - \frac{1}{M_2+1} \right) \nonumber \\
  & \leq \frac{1}{w_2} \left(\frac{1}{\mathcal{M}+1} - \frac{1}{M_2+1} \right) \nonumber \\
  & < \frac{1}{w_2} \left(\frac{1}{\mathcal{M}} \right) \leq \epsilon. \label{eq:tauChy}
\end{align}

\noindent Define the zero-padded conditional sliding windows embedding of $f_1$ given $f_2$ for the smaller embedding dimension $M_1$ by  $\SW_{M_1,\tau(M_1)}f_{1|2}(t) = \Big( f_1(t), \ldots, f_1(t+M_1\tau(M_1)), \underbrace{0, \ldots, 0}_{M_2 - M_1} \Big)^T$.
Then for $i=1, \dots, M_1+1$, by the MVT there exists some $c_i \in I_i = \Big( t + (i-1)\tau(M_2), t + (i-1)\tau(M_1) \Big)$ such that 

\begin{align}
  |f_1'(c_i)| \Big\vert (i-1) \Big( \tau(M_1) - \tau(M_2) \Big) \Big\vert = \Big\vert f_1\big(t+(i-1)\tau(M_1)\big) - f_1\big(t+(i-1)\tau(M_2)\big) \Big\vert. \label{eq:tauMVT} \\
\end{align}

\noindent Applying the results in Equations (\ref{eq:tauChy}) and (\ref{eq:tauMVT}), we obtain the following relation.
\begin{align*}
  \norm{ \SW_{M_1,\tau(M_1)} f_{1|2}(t) - \SW_{M_2,\tau(M_2)} f_{1|2}(t) }_2^2  \hspace*{3.5in} \\
  \hspace*{0.2in} = \sum_{i=1}^{M_1+1} \Big\vert f_1\Big(t+(i-1)\tau(M_1)\Big) - f_1\Big(t+(i-1)\tau(M_2)\Big) \Big\vert^2
  + \sum_{M_1+1}^{M_2+1} \Big\vert f_1\Big(t+(i-1)\tau(M_2)\Big) \Big\vert^2 \\
  = \sum_{i=1}^{M_1+1} |f_1'(c_i)|^2 |i-1|^2 |\tau(M_1)-\tau(M_2)|^2 ~~+~~
  \sum_{M_1+1}^{M_2+1} \Big\vert f_1\Big(t+(i-1)\tau(M_2)\Big)\Big\vert^2 \hspace*{0.95in} \\
   \leq \epsilon^2 (M_1+1)^3 \sqrt{\sum_{i=1}^{M_1+1} |f_1'(c_i)|^2} ~~+~~ \sum_{M_1+1}^{M_2+1} \Big\vert f_1\Big(t+(i-1)\tau(M_2)\Big)\Big\vert^2 \hspace*{1.7in} \\
   = \epsilon^2 \cdot g(M_1,f_1) ~~+~~ h(M_1,M_2,f_1) \hspace*{3.78in} \\
   \leq \left(\epsilon \cdot \sqrt{g(M_1,f_1)} + \sqrt{h(M_1,M_2,f_1)}\right)^2, \hspace*{3.35in} 
\end{align*}
since $g(M_1,f_1)$ and $h(M_1,M_2,f_1)$ are nonnegative constants for any given comparison, and $\epsilon > 0$.

\[\text{Then } \dH \hspace*{-0.01in} \Big(\hspace*{-0.01in} \SW_{M_1,\tau(M_1)}f_{1|2}(t) , \SW_{M_2,\tau(M_2)}f_{1|2}(t) \hspace*{-0.01in} \Big) \hspace*{-0.01in} \leq \epsilon \cdot \sqrt{g(M_1,f_1)} + \sqrt{h(M_1,M_2,f_1)}\,, \hspace*{0.5in}\]
which implies that 
\[ \dB \hspace*{-0.03in}\Big(\hspace*{-0.03in} \dgm_1(\SW_{M_1,\tau(M_1)}f_{1|2}(t)),\dgm_1(\SW_{M_2,\tau(M_2)}f_{1|2}(t))\hspace*{-0.03in}\Big) \hspace*{-0.02in} \leq \epsilon \cdot 2 \sqrt{g(M_1,f_1)} + 2\sqrt{h(M_1,M_2,f_1)}. \]

By a similar argument as in the proof of Theorem \ref{thm:stability_conditional_score_change_f2}, this means that 

\begin{align*}
  \Big\vert \mp \Big( \dgm_1(\SW_{M_1,\tau(M_1)}f_{1|2}(t)) \Big) - \mp \Big( \dgm_1(\SW_{M_2,\tau(M_2)}f_{1|2}(t)) \Big) \Big\vert \hspace*{1.4in}\\
  \leq \epsilon \cdot 4 \sqrt{g(M_1,f_1)} + 4\sqrt{h(M_1,M_2,f_1)} \hspace*{0.4in} \\
  \implies \hspace*{0.4in} \vert \score_{M_1}(f_1|f_2) - \score_{M_2}(f_1|f_2) \vert \leq \frac{4}{\sqrt{3}} \left( \epsilon \cdot \sqrt{g(M_1,f_1)} + \sqrt{h(M_1,M_2,f_1)}\right)\,.
\end{align*}

Hence the difference between the scores under $M_1$ and $M_2$ is bounded above by a linear function of $\epsilon$ with positive slope. 
Therefore, the smaller the $\epsilon$-precision is, the closer these scores are to each other. 
\end{proof}

\subsection{Stability Under Dimension Reduction}

As shown by Theorem \ref{thm:min_M_for_conditional_score}, for sufficiently high enough precision of our observed score $\score_M(f_1|f_2)$ to the true underlying score $\score(f_1|f_2)$ (i.e. for sufficiently small enough $\epsilon > 0$), larger embedding dimensions are required.
For sufficiently large embedding dimensions, pairwise distances between points have been shown to concentrate \cite{AgHiKe2001}.
As well, the bottleneck when using the periodicity score in practice is the computation of the dimension 1 persistence diagram of the Vietoris-Rips filtration of the SWE.
In general, for a point cloud $X$ with $N$ points, the computation of $\dgm_1(X)$ runs in $O(N^6)$ time, although faster approaches may be available in lower dimensions \cite{KoMeRoTu2024}.
Both of these limitations in the high-dimensional setting inspire us to prove the stability of our scoring function under dimension reduction of the conditional sliding windows embedding.
That is, given a sufficiently large enough dimension $M$ to reach this desired precision, so long as the top principle components capture a majority of the variance in the conditional embedding upon orthogonal projection, the conditional periodicity score won't change much.
We hence study the stability of our conditional and bi-conditional periodicity scores under principal component analysis (PCA), a widely used dimension reduction technique \cite{Jo2002}.

\begin{theorem}[Stability of Conditional Periodicity Score Under Dimension Reduction] \label{thm:stability_conditional_score_und_PCA}
    Let $K \leq M+1$ for $K \in \vN$. Suppose $f_1,f_2 : \vR \rightarrow \vR$ are continuous and differentiable univariate time series. 
    For $T \subseteq \vR$, define the orthogonal projection of $X = \SW_{M,\tau_{w_2}}f_{1|2}(T)$ onto its top $K$ principal components by $\phi(X) = ( \langle \vc_1,X \rangle,\ldots, \langle \vc_K,X \rangle )^T$ for orthonormal eigenvectors and corresponding eigenvalues $\{ \vc_k,\lambda_k \}_{k=1}^N$ produced by PCA. Suppose $X$ contains $N \in \vN$ points. Denote the conditional periodicity score under $\phi$, $\score_\phi(f_1 \vert f_2)$, as the normalized maximum 1-d persistence from VR-filtration on $\phi(X)$. 
    Then 
    \[ | \score(f_1 | f_2) - \score_\phi(f_1 | f_2) | \leq \sqrt{\frac{8}{3}} \sqrt[4]{\sum_{i=K+1}^N \lambda_i^2}.\]
\end{theorem}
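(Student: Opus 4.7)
The strategy is to adapt the argument for Theorem~\ref{thm:stability_conditional_score}: bound a suitable distance between the point clouds $X$ and $\phi(X)$, and then propagate that bound through inequality~\eqref{eq:known_stability_result} to the bottleneck distance, to max persistence, and finally to the conditional periodicity score. The new difficulty is that $X \subset \vR^{M+1}$ while $\phi(X) \subset \vR^{K}$, so the Hausdorff distance used in Theorem~\ref{thm:stability_conditional_score} is not directly applicable; I would instead use the distortion form of the Gromov--Hausdorff distance from Definition~\ref{def:Gromov_Hausdorf_dist_distortion}.

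First, I would form the correspondence $R = \{(x_i, \phi(x_i)) : x_i \in X\}$ and estimate its distortion. Identifying $\phi$ with the orthogonal projection $P_K = \sum_{k=1}^{K} \vc_k \vc_k^{T}$ in $\vR^{M+1}$ read in the orthonormal basis of principal components, the Pythagorean identity reads $\norm{x_i - x_j}_2^{2} = \norm{\phi(x_i) - \phi(x_j)}_2^{2} + \norm{(I - P_K)(x_i - x_j)}_2^{2}$. Since $\norm{x_i - x_j}_2 \geq \norm{\phi(x_i) - \phi(x_j)}_2 \geq 0$, the elementary inequality $(a - b)^{2} \leq a^{2} - b^{2}$ for $a \geq b \geq 0$ yields
\[
\bigl(\norm{x_i - x_j}_2 - \norm{\phi(x_i) - \phi(x_j)}_2\bigr)^{2} \;\leq\; \norm{(I - P_K)(x_i - x_j)}_2^{2}.
\]
I would then bound the right-hand side by an operator-norm argument: writing $Y = X(I - P_K) \in \vR^{N \times (M+1)}$, the nonzero eigenvalues of $YY^{T}$ are exactly the discarded PCA eigenvalues $\lambda_{K+1} \geq \cdots \geq \lambda_N$, so $\norm{Y}_{\op{op}} = \sqrt{\lambda_{K+1}}$. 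Since $(I - P_K)(x_i - x_j)$ is the transpose of $(e_i - e_j)^{T} Y$, we get $\norm{(I - P_K)(x_i - x_j)}_2 \leq \norm{e_i - e_j}_2 \norm{Y}_{\op{op}} = \sqrt{2\lambda_{K+1}}$, and hence $\dis(R) \leq \sqrt{2\lambda_{K+1}}$.

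To close the argument, I would coarsen $\lambda_{K+1} \leq \sqrt{\sum_{i=K+1}^{N} \lambda_i^{2}}$ and combine with $\dGH(X, \phi(X)) \leq \tfrac{1}{2}\dis(R)$ from Definition~\ref{def:Gromov_Hausdorf_dist_distortion} to obtain $\dGH(X, \phi(X)) \leq \tfrac{1}{\sqrt{2}}\sqrt[4]{\sum_{i=K+1}^{N}\lambda_i^{2}}$. Chaining this with the inequality $\dB \leq 2\dGH$ from~\eqref{eq:known_stability_result} and the max-persistence-to-bottleneck bound $\enorm{\mp(\dgm_1(X)) - \mp(\dgm_1(\phi(X)))} \leq 2\dB$ already produced in the proof of Theorem~\ref{thm:stability_conditional_score}, together with the $1/\sqrt{3}$ normalisation from Definition~\ref{def:conditional_periodicity_score}, delivers the advertised constant $\tfrac{4}{\sqrt 3}\cdot\tfrac{1}{\sqrt 2} = \sqrt{8/3}$.

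The main obstacle I anticipate is being explicit about which spectral quantities the $\lambda_i$ in the statement represent: depending on whether PCA is read off the sample covariance, the dual Gram matrix $XX^{T}$, or the squared singular values of the (possibly centred) data matrix, the identification $\norm{Y}_{\op{op}}^{2} = \lambda_{K+1}$ can shift by constants, and the upper limit $N$ (rather than $M+1$) in the sum suggests a dual viewpoint. I would pin down this identification up front so the operator-norm step is unambiguous. A secondary concern is that the coarsening $\lambda_{K+1} \leq \sqrt{\sum_{i>K}\lambda_i^{2}}$ is lossy when the tail eigenvalues decay slowly, but it is precisely this maneuver that introduces the fourth root in the stated bound, so I would treat it as the intended source of the $\sqrt[4]{\,\cdot\,}$ form rather than searching for a sharper spectral estimate.
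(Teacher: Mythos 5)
Your proof is correct and arrives at the paper's constant, but it takes a genuinely different route for the key metric estimate. The paper also passes through the distortion form of Gromov--Hausdorff with the correspondence given by $\phi$, but it bounds $\dis(\phi)$ globally through matrices of squared distances: it uses $\norm{D_X - D_Y}_{\max}^2 \leq \norm{D_X^{\circ^2} - D_Y^{\circ^2}}_{\max} \leq \norm{D_X^{\circ^2} - D_Y^{\circ^2}}_F$ (the first inequality imported from \cite{MaKrGa2024}), converts squared-distance matrices to Gram matrices via the classical-MDS double-centering identity $XX^T = -\tfrac{1}{2} C_N D_X^{\circ^2} C_N$, invokes the approximation $C_N \approx I$ for large $N$, and evaluates the Frobenius norm of $\Lambda - \Lambda\vert_K$ to get $\dis(\phi) \leq \sqrt{2}\,\sqrt[4]{\sum_{i=K+1}^N \lambda_i^2}$; the subsequent chain $\dGH \rightarrow \dB \rightarrow \mp \rightarrow \score$ is identical to yours. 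Your pointwise argument --- the Pythagorean identity for the orthogonal projection, the elementary inequality $(a-b)^2 \leq a^2 - b^2$, and an operator-norm bound on the residual factor $X(I-P_K)$ --- yields the sharper intermediate estimate $\dis(R) \leq \sqrt{2\lambda_{K+1}}$, and you only lose to the stated fourth-root form through the deliberate coarsening $\lambda_{K+1} \leq \sqrt{\sum_{i>K}\lambda_i^2}$. This buys two things: you avoid the external matrix lemma, and you avoid the heuristic step ``$C_N \approx I$'' (which is not an exact identity), so your derivation is exact and would in fact support the stronger conclusion $\vert \score(f_1\vert f_2) - \score_\phi(f_1\vert f_2)\vert \leq \sqrt{8/3}\,\sqrt{\lambda_{K+1}}$. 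The paper's route, in exchange, produces the tail-sum quantity $\sqrt[4]{\sum_{i>K}\lambda_i^2}$ natively via the Frobenius norm, which is the form advertised in the statement and ties to the cited prior work. Your caveat about identifying the $\lambda_i$ is well taken: the statement itself is loose here (it lists $N$ eigenpairs $\{\vc_k,\lambda_k\}$ while the $\vc_k$ project vectors of $\vR^{M+1}$), and the paper's proof reads the $\lambda_i$ off the Gram matrix $XX^T$; under that same reading your identification $\norm{X(I-P_K)}_{\op{op}}^2 = \lambda_{K+1}$ is exactly consistent with the paper's usage, so no gap results.
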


\begin{proof}
  Our proof is inspired by  methods used in \cite{MaKrGa2024}.
  Notice that $\phi$ is a relation in $\mathcal{R}(X,Y)$ for $Y = \phi(X)$.
  Then by Definition \ref{def:Gromov_Hausdorf_dist_distortion} we have that $\dGH(X,Y) \leq \frac{1}{2} \dis(\phi)$, where $\dis(\phi)^{2} = \norm{D_X - D_Y}_{\max}^{2}$ with $D_X$ being the matrix of pairwise distances in $X$.
  We get that \cite[Lemma 3.9]{MaKrGa2024}
    \[\norm{D_X - D_Y}_{\max}^2 \leq \norm{D_X^{\circ^2} - D_Y^{\circ^2}}_{\max},\]
    where $D_X^{\circ^2}$ denotes the matrix of squared pairwise Euclidean distances in $X$.

    Recall that for a matrix $A$, $\norm{A}_{\max}^2 = \left( \max{_{ij} \vert A_{ij} \vert } \right)^2 = \max{_{ij} \vert A_{ij} \vert ^2} \leq \sum_{ij} \vert A_{ij} \vert^2 = \norm{A}_F^2$, where $\norm{\cdot}_F$ denotes the Frobenius norm.
    Then we have 
    \[ \norm{D_X^{\circ^2} - D_Y^{\circ^2}}_{\max} \leq \norm{D_X^{\circ^2} - D_{Y}^{\circ^2}}_F.\]
    Define the eigendecomposition of the covariance of $X$ as $XX^T = Q \Lambda Q^T$, where $Q=[\vc_1, \dots, \vc_N]$ is the $N \times N$ matrix of orthonormal eigenvectors and $\Lambda = \diag( \lambda_1, \dots, \lambda_N )$ is the $N \times N$ diagonal matrix of eigenvalues corresponding to the orthogonal projection of $X$.
    Then the eigendecomposition of the $K$-dimensional subspace containing the top $K$ principal components of $X$ can be defined by $YY^T = Q \Lambda \vert_K Q^T$, where $\Lambda \vert_K = \diag( \lambda_1, \dots, \lambda_K,\underbrace{0,\dots,0}_{N-K} )$.
    Then, if we center the squared distances in $D_X^{\circ^2}$ and $D_Y^{\circ^2}$, we obtain the relations
    \begin{align*}
      XX^T  = -\frac{1}{2}C_N D_X^{\circ^2} C_N & ~\text{ and } \\
      YY^T  = -\frac{1}{2}C_N D_Y^{\circ^2} C_N,
    \end{align*}
    where $C_N$ is the $N \times N$ centering matrix with diagonal entries $1 - \frac{1}{N}$ and off-diagonal entries $-\frac{1}{N}$ \cite{anselin2020}.
    Then $D_X^{\circ^2} = -2 C_N (Q \Lambda Q^T) C_N$ and $D_Y^{\circ^2} = -2 C_N (Q \Lambda \vert_K Q^T) C_N$.
    Since $Q^TQ = QQ^T = I$, we obtain that
    \begin{align*}
      \norm{D_X^{\circ^2} - D_Y^{\circ^2}}_F & = 2  \norm{C_N (Q\Lambda Q^T - Q \Lambda \vert_KQ^T) C_N}_F \\
      & = 2 \norm{ Q(\Lambda - \Lambda \vert_K)Q^T}_F \quad \text{ (since }  C_N \approx I  \text{ for sufficiently large }  N\text{)} \\
      & = 2 \sqrt{\tr(Q (\Lambda - \Lambda\vert_K)^2 Q^T)} \\
      & = 2 \sqrt{\tr((\Lambda-\Lambda\vert_K)^2)} \quad \text{ (since trace is cyclically invariant)} \\
      & = 2 \sqrt{ \sum_{i=K+1}^N \lambda_i^2}. 
    \end{align*}
    Thus we get that $\dis(\phi) \leq \sqrt{2} \sqrt[4]{\sum_{i=K+1}^N \lambda_i^2}$ and hence $\dGH(X,Y) \leq \frac{\sqrt{2}}{2}\sqrt[4]{\sum_{i=K+1}^N\lambda_i^2}$.
    Then by Equation \ref{eq:VR_stability_PH}, we get that
    \[ \dB(\dgm_1(X),\dgm_1(Y)) \leq 2 \dGH(X,Y) \leq \sqrt{2} \sqrt[4]{\sum_{i=K+1}^N\lambda_i^2}. \]
    Hence by similar arguments as in the proof of Theorems \ref{thm:stability_conditional_score_change_f2} and \ref{thm:stability_conditional_score_change_f1}, we have that 
    \begin{align*}
      |\score(f_1|f_2) - \score_\phi(f_1|f_2)| &  = \frac{1}{\sqrt{3}} \left\vert \mp( \dgm_1(X) ) - \mp( \dgm_1(\phi(X))) \right\vert \\
        & \leq \frac{2}{\sqrt{3}} \dB( \dgm_1(X), \dgm_1(\phi(X))) \\
        & \leq 2\sqrt{\frac{2}{3}} \sqrt[4]{\sum_{i=K+1}^N\lambda_i^2}. \\
        & = \sqrt{\frac{8}{3}}\sqrt[4]{\sum_{i=K+1}^N\lambda_i^2}. \qedhere
    \end{align*}
\end{proof}

\begin{corollary}
    [Stability of Bi-Conditional Periodicity Score Under Dimension Reduction]\label{cor:stability_biconditional_score_und_PCA}
    Let $K \leq M + 1$ for $K \in \vN$.
    Suppose $f_1,f_2 : \vR \rightarrow \vR$ are continuous and differentiable univariate time seies.
    For $T \subseteq \vR$, define the eigenvalues obtained from the  orthogonal projection ($\phi$) of $\SW_{M,\tau_{w_2}}f_{1|2}(T)$ and $\SW_{M,\tau_{w_2}}f_{2|1}(T)$ onto their top $K$ principal components respectively by $\{ \lambda_i \}_{i=1}^N$ and $\{ \gamma_i \}_{i=1}^N$.
    Define the bi-conditional periodicity score of $f_1$ and $f_2$ under this orthogonal projection by $\score_\phi(f_1,f_2)$. 
    Then

    \[ |\score(f_1,f_2) - \score_\phi(f_1,f_2)| \leq \sqrt{\frac{2}{3}} \left( \sqrt[4]{\sum_{i=K+1}^N \lambda_i^2} + \sqrt[4]{\sum_{i=K+1}^N \gamma_i^2} \right). \]
\end{corollary}

\begin{proof}
    By Theorem \ref{thm:stability_conditional_score_und_PCA}, 

    \begin{align*}
        |\score(f_1,f_2) - \score_\phi(f_1,f_2)| & = \frac{1}{2} \left\vert \big( \score(f_1|f_2) + \score(f_2|f_1) \big) - 
        \big( \score_\phi(f_1|f_2) + \score_\phi(f_2|f_1) \big) \right\vert \\
        & \leq \frac{1}{2} \big( |\score(f_1|f_2) - \score_\phi(f_1|f_2)| + |\score(f_2|f_1) - \score_\phi(f_2|f_1)| \big) \\
        & \leq \frac{1}{2} \sqrt{\frac{8}{3}} \left( \sqrt[4]{\sum_{i=K+1}^N \lambda_i^2} + \sqrt[4]{\sum_{i=K+1}^N \gamma_i^2} \right) \\
        & = \sqrt{\frac{2}{3}} \left( \sqrt[4]{\sum_{i=K+1}^N \lambda_i^2} + \sqrt[4]{\sum_{i=K+1}^N \gamma_i^2} \right).
    \end{align*}
\end{proof}

\section{Experimental Results}\label{sec:exp_results}

\subsection{An Algorithm for the Conditional Periodicity Score}

We introduce a procedure for computing the conditional periodicity score of a discrete time series $f_i(T)$ given another $f_j(T)$, for $T = \{t_0,t_1,\dots,t_{N-1}\}_{N \in \mathbb{N}}$.
We then estimate the frequency of $f_j$ using the arg maximum of the spectral magnitude produced via the discrete fast Fourier transform. 
We particularly compute the discrete fast Fourier spectra of $f_j$, which we denote below by the absolute value of the dot product of the discrete signal with a pure sinusoidal with frequency $k$ cycles every $N$ time steps:

\begin{align*}
    | DFT[f_j](k) | = \frac{1}{N} \left\vert \sum_{n=0}^{N-1} (f_j)_n \cdot e^{-i 2\pi \cdot \frac{k}{N} \cdot n} \right\vert & = \frac{1}{N} \left\vert \sum_{n=0}^{N-1} (f_j)_n \cos \left( 2 \pi \frac{k}{N} n \right) - i \sum_{n=0}^{N-1} (f_j)_n \sin \left( 2 \pi \frac{k}{N} n \right) \right\vert \\
    & = \frac{1}{N} \left( \sum_{n=0}^{N-1} (f_j)_n \cos \left( 2 \pi \frac{k}{N} n \right) \right)^2 + \frac{1}{N} \left( \sum_{n=0}^{N-1} (f_j)_n \sin \left( 2 \pi \frac{k}{N} n \right) \right)^2. \\
\end{align*}

\noindent We then estimate $w_j$ as the frequency corresponding to the maximum spectral density of $f_2$, denoted by $w_j = \arg\max_k |DFT[f_j](k)|$.
We then use this estimate to define the time lag as $\tau_{w_j} = \frac{1}{w_j(M+1)}$ given an embedding dimension $M \in \vN$.
We then include an option for computing the top $K \in N$ principal components of the conditional sliding windows embedding $\SW_{M,\tau_{w_j}}f_{i|j}(T)$ under an orthogonal projection $\phi: \vR^{M+1} \rightarrow K$.
We finally perform Vietoris-Rips filtration on the conditional sliding windows embedding, obtaining the conditional periodicity score.

\begin{algorithm}[ht!]
\caption{Procedure for computing $\score(f_i|f_j)$}
\label{alg:pseudocode}
\textbf{Inputs:} An embedding dimension $M \in \vN$, two discrete univariate time series $\{f_i(T),f_j(T)\}$ for $T = \{t_0,t_1,\dots,t_{N-1}\}_{N \in \vN}$, PCA $\in \vR$.
\smallskip
\textbf{Estimate} $w_j$ via spectral analysis using the (Discrete) Fast Fourier Transform up to the Nyquist frequency:
\[ w_j = \underset{k=0,\dots,N/2-1}{\arg\max}|DFT[f_j](k)| \]

\textbf{Define} $\tau_{w_j} = \left\lfloor \frac{1}{w_j(M+1)} \right\rfloor$.

\textbf{Compute} $X = \SW_{M,\tau_{w_j}}f_{i|j}(T)$.

\textbf{If} PCA = 1,

\hspace*{0.3in} \textbf{For} $K = \min\{2,M+1\}$,

\hspace*{0.5in} \textbf{Compute} $\{ \phi(X) \}$ for orthogonal projection $\phi$ onto the first $K$ principle components of $X$.

\hspace*{0.5in} \textbf{Define} $Y = \phi(X)$.

\medskip

\textbf{Define} $Y = \phi(X)$.

\textbf{Center} and normalize $Y$ to obtain $Z$.

\smallskip

\textbf{Compute} $\dgm_1(Z)$ from Vietoris-Rips filtration on $Z$ and extract $\mp(\dgm_1(Z))$ from $\dgm_1(Z)$. 

\smallskip

\textbf{Compute} $\score(f_i|f_j)$ using $\mp(\dgm_1(Z))$. 
\medskip

\textbf{Return:} $\score(f_i|f_j)$.
\end{algorithm}

\begin{remark}[Computational Complexity of the Bi-Conditional Periodicity Score]\label{rmk:computational_complexity_score}
Algorithm \ref{alg:pseudocode} runs in  $O \Big( N \log N + PK^2 + P^6 \Big)$ time, where $N$ is the number of points in the discrete univariate input signals $f_i$ and $f_j$, $P = N - M \tau_{w_j}$ is the number of points in the conditional sliding windows embedding of $f_i$ given $f_j$, and $K < M+1$ is the number of principal components computed from the conditional sliding windows embedding for embedding dimension $M \in \vN$.
The discrete Fast Fourier Transform on $f_j$ can be computed in  $O(N \log N)$ time \cite{brigham1967}.
The PCA computations run in $O(PK^2)$ time \cite{Jo2002,yigit2024}.
The bottleneck step is usually the computation of the 1D persistence diagram using the VR filtration, which runs in $O(P^6)$ time \cite{KoMeRoTu2024}.

Notice that if we don't perform dimension reduction on the conditional embedding, then computing $\score(f_i|f_j)$ takes $O(N \log N + P^6)$ time.
Hence with PCA, the bi-conditional periodicity score, $\score(f_i,f_j)$, requires $O(N \log N + PK^2 + P^6)$ computations, and without PCA requires only $O(N \log N + P^6)$.
\end{remark}

\subsection{Synthetic Data}

\subsubsection{Stability of the Conditional Periodicity Score}

For our first experiment, we assess the stability of our conditional periodicity score when quantifying the extent to which varied frequencies $w_2$ are present in a time series $f_1$ with varied levels of added Gaussian noise.
We further compare this stability to the Fourier Spectrum of the noisy signal $f_1$ as well as percent recurrence.
We fix $f_1(t) = \sin(2\pi w_1 T)$ for $T = \{0,1,\dots,299\}$ and frequency $w_1 = 8/300$ Hz. 
We further define the noisy signal as $f_1^\sigma(T) = f_1(T) + \mathcal{N}(0,\sigma*\text{sd}(f_1))$, where $\text{sd}(f_1)$ denotes the standard deviation of $f_1$. 
We then vary the frequency $w_2$ of the reference sinusoidal $f_2$ among the values $w_2 = \{ k/300 \}$ for $k = 5, 5.25, \dots, 13$.
Additionally, we vary the standard deviation of Gaussian noise among the values $\sigma = \{0.01,0.05,0.1,0.15\}$ (i.e. 1\%, 5\%, 10\%, and 15\%).
For computing \%REC and $\score(f_1|f_2)$, we fix the embedding dimension to $M=1$. 
To compute \%REC, we additionally fix the time lag to $\tau = 10$ by taking the first minimum of the automutual information of $f_1$ \cite{wallot2018analyzing}. 
We finally fix the distance threshold to denote a cross-recurrence as $\tol = 0.1 \cdot \max(||\SW_{M,\tau}f_1(T) - \SW_{M,\tau} f_2(T)||)$ (i.e. 10\% of the maximum distance between the embeddings of $f_1$ and the reference sinusoidal).
We finally define our conditional periodicity score as 

\[ \score(f_1|f_2) = 1 - \frac{\mp \left( \dgm_1(\SW_{M,\tau_{w_2}}f_{1|2}(T) \right)}{\sqrt{3}}. \]

See our results in Figure \ref{fig:compare_condtional_score_stability_synthetic} below.
As expected, each measure shows a peak around the underlying frequency of $f_1$ for 0\% noise.
Additionally, our measure and the Fourier magnitude are much more robust to added Gaussian noise than \%REC is.
The stability of our conditional periodicity score and the spectral magnitude motivates us to use this magnitude to compute a stable measure of time series correlation, which we discuss below.

\begin{figure}[ht!]
    \centering
    \includegraphics[width=0.8\textwidth]{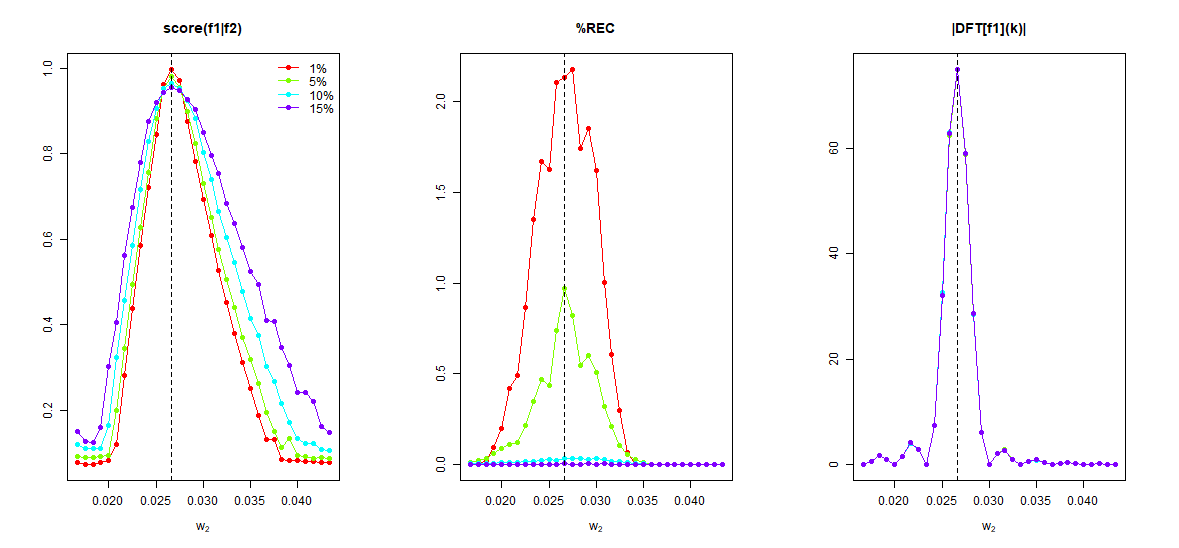}
    \caption{Given a time series $f_1$ of $N = 300$ points, we compare the stability of $\score(f_1|f_2)$, $|\text{REC}[f_1](k)|$ for $w_2 = k/N$, and \%REC$[f_1,e^{i 2\pi w_2 t}]$ when subject to added gaussian noise to $f_1$.
    We include a vertical dashed line at the underlying frequency $w_1$.}
    \label{fig:compare_condtional_score_stability_synthetic}
\end{figure}

\subsection{Stability of the Bi-Conditional Periodicity Score}
In this experiment, we compare the stability of $\score(f_1,f_2)$ as computed via Algorithm \ref{alg:pseudocode} to \%REC against added Gaussian noise to both $f_1$ and $f_2$. 
We again fix $f_1$ as before and vary the frequency of $f_2$ between the same values of $w_2$ as previously mentioned.
We again show the average of each measure over 20 samples of each signal $f_1^\sigma(t)$ and $f_2^\sigma(t)$ at each Gaussian noise level as described above. 
We define the embedding dimension as $M = 1$ for both measures.
For computing \%REC and \%DET, we fix the distance threshold to $\tol = 0.2 \cdot \max \{ || \SW_{M,\tau}f_1(T) - \SW_{M,\tau}f_2(T) ||\}$ and the time lag to be the first minimum of the automutual information of $f_1$ when $\sigma = 0$ (i.e. $\tau = 10$) \cite{wallot2018analyzing} .
To compute \%DET, we additionally fix the minimum number of diagonal points to $\minDL = 5$.

Notice in Figure \ref{fig:comparing_stability_biconditional_score_REC}, that our measure is much more stable to small levels of added Gaussian noise to both $f_1$ and $f_2$ than \%REC.
This suggests that our bi-conditional periodicity score is indeed a more stable measure of time-series correlation.

\begin{figure}[ht!]
\centering
\includegraphics[width=0.325\textwidth]{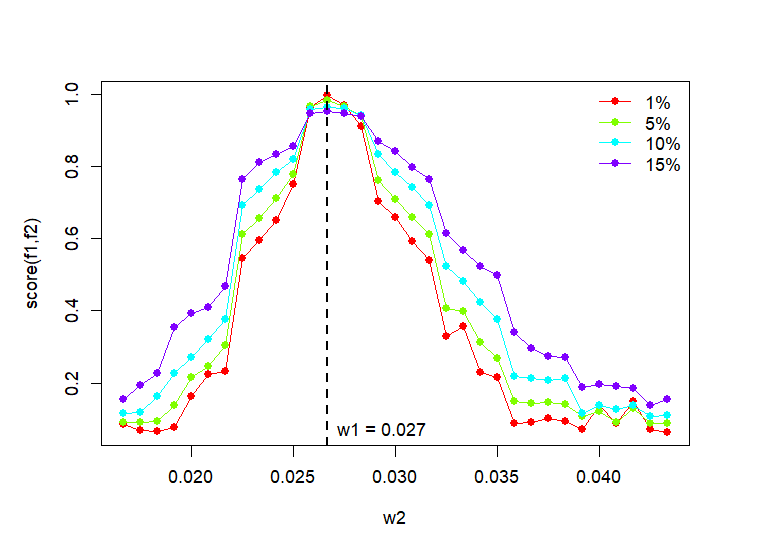}
\includegraphics[width=0.325\textwidth]{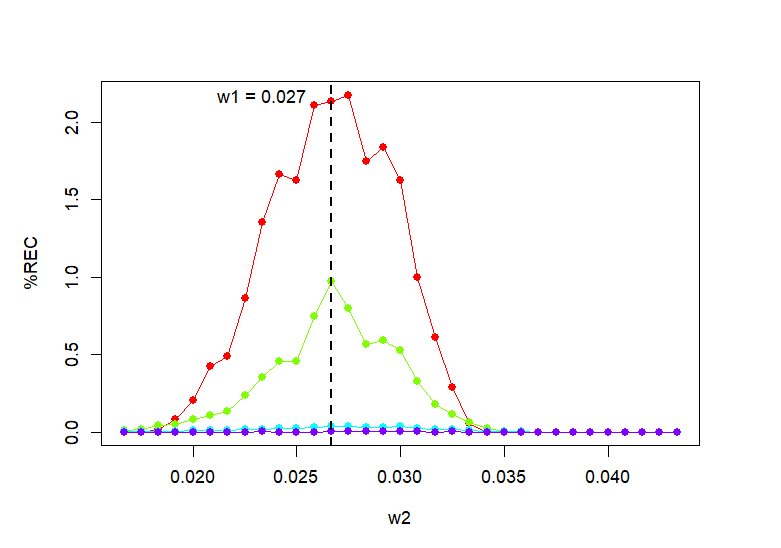}
\includegraphics[width=0.325\textwidth,height=0.155\textheight]{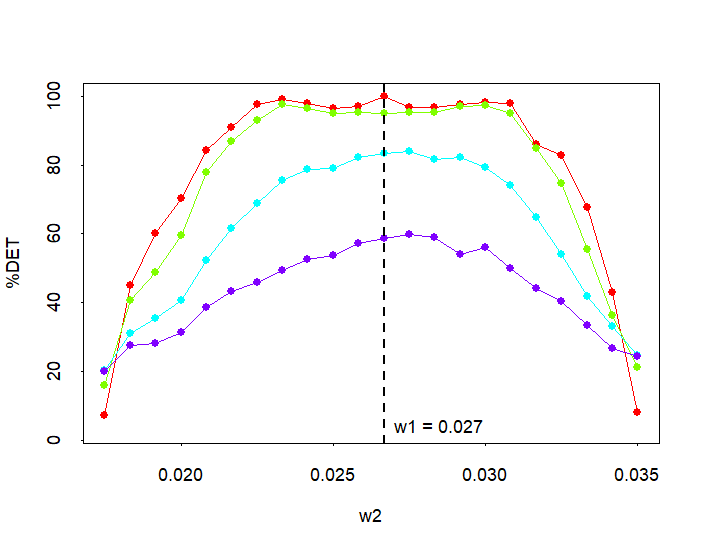}
\caption{Comparing the average stability of the bi-conditional periodicity score (left), percent recurrence (middle), and percent determinism (right) to added Gaussian noise to the sinusoidals $f_1(t) = \sin(2 \pi w_1 t)$ and $f_2 = \sin(2 \pi w_2 t)$.
We fix the frequency of $f_1$ to $w_1 = 8/300$ and vary that of $f_2$.
Then $\score(f_1,f_2)$ is the average of $\score(f_1|f_2)$ and $\score(f_2|f_1)$.
We denote the underlying frequency of $f_1$ via a vertical dashed line.}
\label{fig:comparing_stability_biconditional_score_REC}
\end{figure}

\subsection{Real Data}

For this experiment, we compare the similarity of climate data from three different locations near Australia.
We specifically compare three different time series of average hourly cooling temperatures (in degrees Farenheit) from the National Oceanic and Atmospheric Administration \cite{arguez_2010}.
The hourly cooling degree typically measures the energy demand (i.e. for air conditioning), by denoting how many degrees the average hourly temperature is above the baseline of $65^\circ$.
These hourly averages are taken over 30 years, between the years of 1981 and 2010.
We compare the corresponding time-series data from three weather stations - Pago Pago airport, Yap Island airport, and Koror weather service office.
In Figure \ref{fig:time_series_embeddings}, we show the respective locations of these stations on a map in black, blue, and green, along with their time series for the first 300 hours (12.5 weeks). 
We denote these time series as $f_\text{Pago}$ (black), $f_\text{Yap}$ (blue), and $f_\text{Koror}$ (green), which are defined on the hours $T = \{0,1,\dots,299\}$.
To pre-process these series, we first interpolate any missing values via cubic splining, and then fit a continuous function to each discrete signal evaluated at more discretized time points $T_\text{fine} = \{0,0.5,\dots,299\}$.

\begin{figure}[ht!]
\centering
\includegraphics[width=0.85\textwidth]{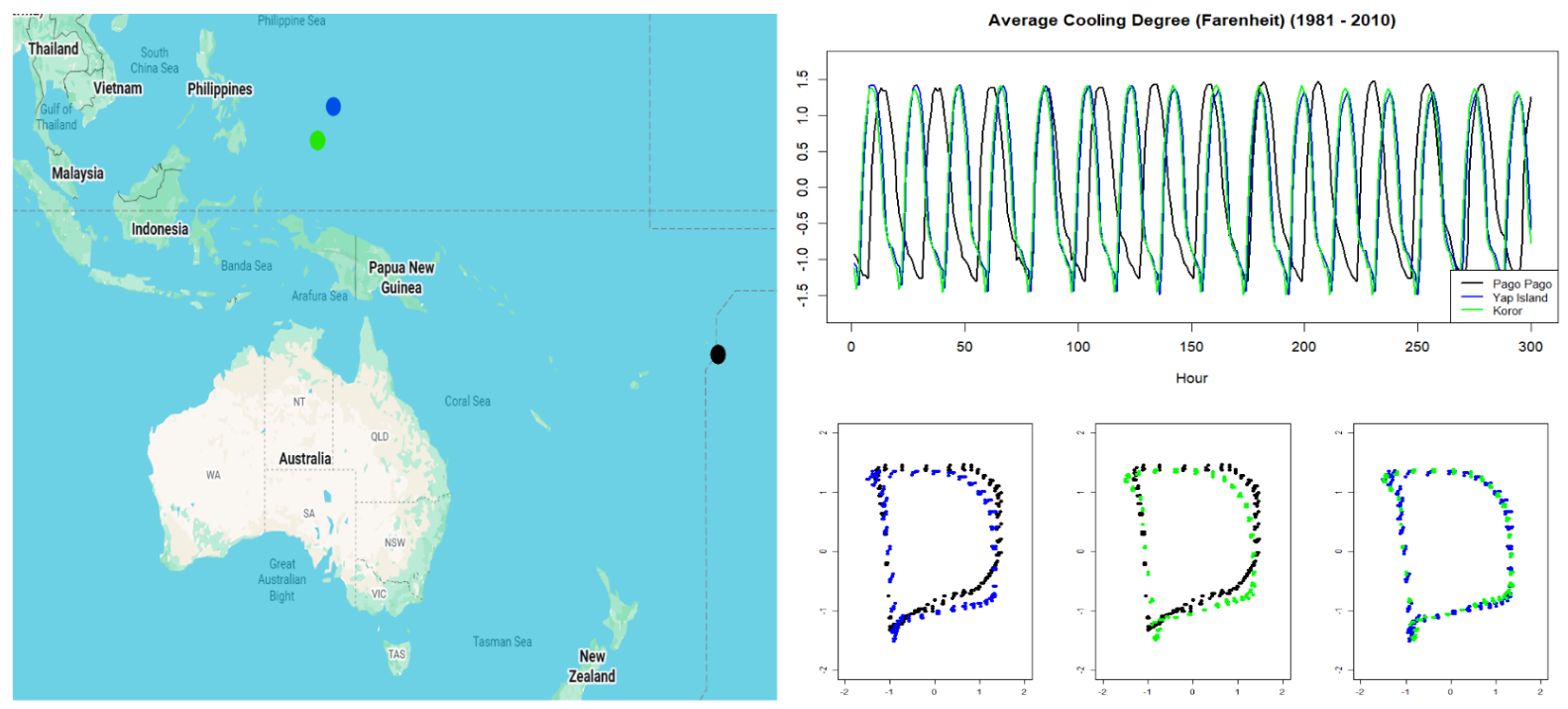}
\caption{Average Hourly Cooling Degrees recorded from three stations between 1981 and 2010, Pago Pago (black), Yap Island (blue), and Korer (green).
We show the locations of these stations on the left, the first 300 hours of data in the top right, and the corresponding pairs of time-series embeddings which we compare via \%DET in the bottom right.}
\label{fig:time_series_embeddings}
\end{figure}

In order to embed these time series, we fix the embedding dimension to $M = 1$.
The remaining parameters we define are used for computing \%DET, including the time lag $\tau_\text{DET} = 5$, the distance threshold of $\tol = c \cdot \max\{||\SW_{M,\tau}f_i(T_\text{fine}) - \SW_{M,\tau}f_j(T_\text{fine})||\}$ for $c = 0.00105$, and the minimum number of points to qualify a diagonal as $\minDL = 9$. 
We obtain the time lag by first extracting the first minimum of automutual information for each of the three time series ($\tau_\text{Pago} = 5$, $\tau_\text{Yap} = 4$, $\tau_\text{Koror}=5$) and then taking their maximum \cite{wallot2018analyzing}.
We denote their resulting embeddings as $\SW_{M,\tau}f_\text{Pago}(t)$, $\SW_{M,\tau}f_\text{Yap}(t)$, and $\SW_{M,\tau}f_\text{Koror}(t)$.
We obtain our distance threshold, $\tol$, as 0.105\% of the maximum pairwise distance between any two embeddings and the minimum $\minDL = 9$ by conducting a grid search of for each pair of series over the parameters $c = \{ 0.00104, 0.00105, 0.00106\}$ and $\minDL = \{8,9,10\}$ and obtaining stable measures of \%DET for all parametric combinations. 
When comparing $f_\text{Pago}$ and $f_\text{Yap}$, $f_\text{Pago}$ and $f_\text{Koror}$, and $f_\text{Yap}$ and $f_\text{Koror}$, we obtain \%DET values in the respective ranges $[5.11,15.9]$, $[9.2,18.61]$, and $[23.8,32.56]$ with respective averages of 10.33\%, 13.91\%, and 28.19\%.
We hence chose the median parameters of $c = 0.00105$ and $\minDL = 9$ for our computations. 
In Figure \ref{fig:time_series_embeddings}, we show all three pairs of time series embeddings for Pago Pago (black), Yap (blue), and Koror (green) which we compare using \%DET. 

In Figure \ref{fig:time_series_embeddings}, we can see that the average cooling degrees at Yap and Koror stations are more similar over time in comparison to Pago Pago. 
This makes sense intuitively, since these two stations are closer geographically, so we'd expect their oceanic environments to behave more similarly. 
As a result, we can also see that the embeddings of Yap and Koror data (shown in blue and green) also share more overlap than those of Yap and Pago Pago or Koror and Pago Pago.

\subsubsection{Comparing Frequency Estimates of Climate Data}

We first compare the frequency distributions produced from the discrete spectral magnitudes $|\text{DFT}[f_i](k)|$ for $k = \{10,11,\dots,40\}$ and $\score(f_i|f_w)$.
We specifically plot these measures over the frequencies $w = \{k/600\}_{k=10}^{40}$ in cycles per hour, and denote the corresponding reference sinusoidal for the conditional periodicity score to $f_w$.
We again fix the embedding dimension for $\score(f_i|f_w)$ to $M = 1$. 
We show the resulting distributions of $|\text{DFT}[f_i](k)|$ (top) and $\score(f_i|f_w)$ (bottom) in Figure \ref{fig:freq_distn_comparison}. 
We show the peaks (i.e. the frequency estimates) for Pago Pago, Yap, and Koror stations via purple vertical dashed lines.

\begin{figure}[ht!]
    \centering
    \includegraphics[width=0.85\linewidth]{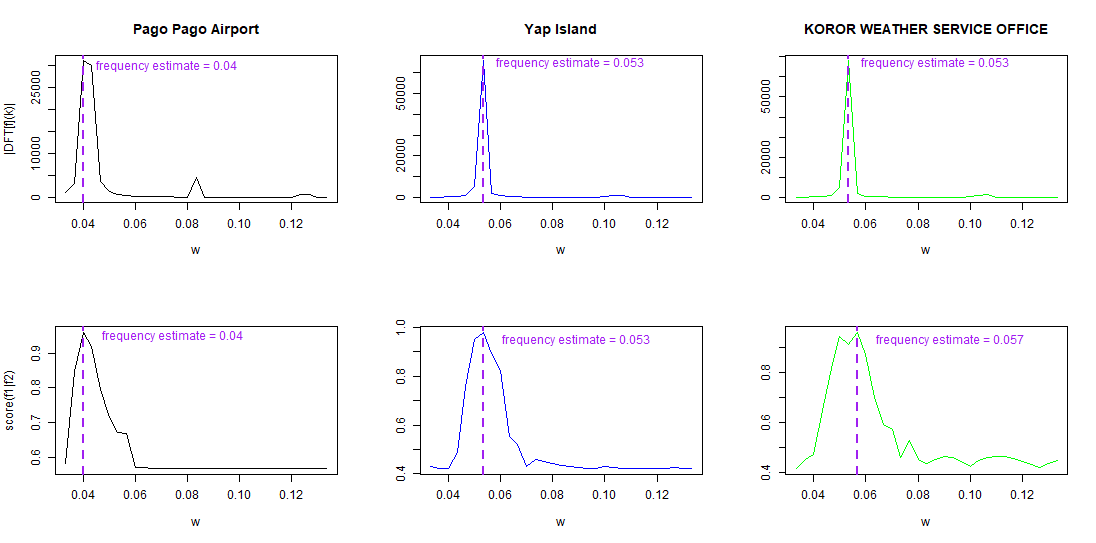}
    \caption{The frequency distributions of three time series of average hourly cooling degrees over thirty years as computed by the Discrete Fourier spectral magnitudes (top) and our conditional periodicity score (bottom).
    Yap Island and Koror both appear to contain temperatures that oscillate at a rate of about 19 hours per cycle, while Pago Pago temperatures oscillate at a slower rate of 25 hours a cycle.
    We fix the embedding dimension to compute $\score(f_1|f_2)$ to $M = 1$.}
    \label{fig:freq_distn_comparison}
\end{figure}

According to Algorithm \ref{alg:pseudocode}, we use the estimates as produced via $|\text{DFT}[f_i](k)|$, but we show the results from $\score(f_i|f_w)$ as well to highlight how close the estimates are for each. 
Using the estimates from $|\text{DFT}[f_i](k)|$, we fix the time lags for the three conditional sliding windows embeddings as $\tau_{\text{Pago}} = \frac{1}{0.04(M+1)}$, $\tau_{\text{Yap}} = \frac{1}{0.053(M+1)}$,
and $\tau_{\text{Koror}} = \frac{1}{0.053(M+1)}$. 
For example, the conditional sliding window embedding of Pago given $f_w$ is denoted $\SW_{M,\tau_w}f_{\text{Pago}|w} = \big( f_\text{Pago}(t),\dots,f_\text{Pago}(t+M\tau_w) \big)$, for $\tau_w = \frac{1}{w(M+1)}$ the time lag for the reference sinusoid $f_w$.

We also point out that the frequency estimates for both measures show greater similarity of Yap and Koror cooling degrees over time (about 19 hours per cycle) when compared to Pago (25 hours per cycle), further validating our intuition about their closeness geographically.

\subsubsection{Comparing Similarity Estimates of Climate Data}

Here we compare the correlation measures produced from percent determinism to our bi-conditional periodicity score for each combination of time series.
As previously mentioned, for both measures we fix the embedding dimension to $M = 1$, and the remaining parameters for \%DET we fix to $\tau_\text{DET} = 5$, $\minDL = 9$, and $\tol = 0.00105 \cdot \max\{ ||\SW_{M,\tau}f_i(t) - \SW_{M,\tau}f_j(t)||\}$.
We use the frequency estimates from the discrete Fourier magnitudes in Figure \ref{fig:freq_distn_comparison} to define the time lag of the conditional sliding window embeddings for each pair of time series comapred. 
For instance, the conditional sliding windows embedding and corresponding conditional periodicity score of Pago Pago given Yap are given below by 

\[ \SW_{M,\tau_\text{Yap}}f_{\text{Pago}|\text{Yap}}(t) = \big( f_\text{Pago}(t), \dots, f_\text{Pago}(t+M \tau_\text{Yap}) \big), \]

\[ \score(f_\text{Pago}|f_\text{Yap}) = 1 - \frac{\mp \left( \SW_{M,\tau_\text{Yap}}f_{\text{Pago}|\text{Yap}}(T_\text{fine}) \right)}{\sqrt{3}}, \]

\noindent where $\tau_\text{Yap} = \frac{1}{0.053(M+1)}$.
The bi-conditional periodicity score of Pago and Yap is then given as the average, $\score(f_\text{Pago},f_\text{Yap}) = \frac{\score(f_\text{Pago}|f_\text{Yap}) + \score(f_\text{Yap}|f_\text{Pago})}{2}$.
We show our results in Figure \ref{fig:results_correlation_climates}.

\begin{figure}[ht!]
    \centering
    \includegraphics[width=0.475\linewidth,height=0.225\textheight]{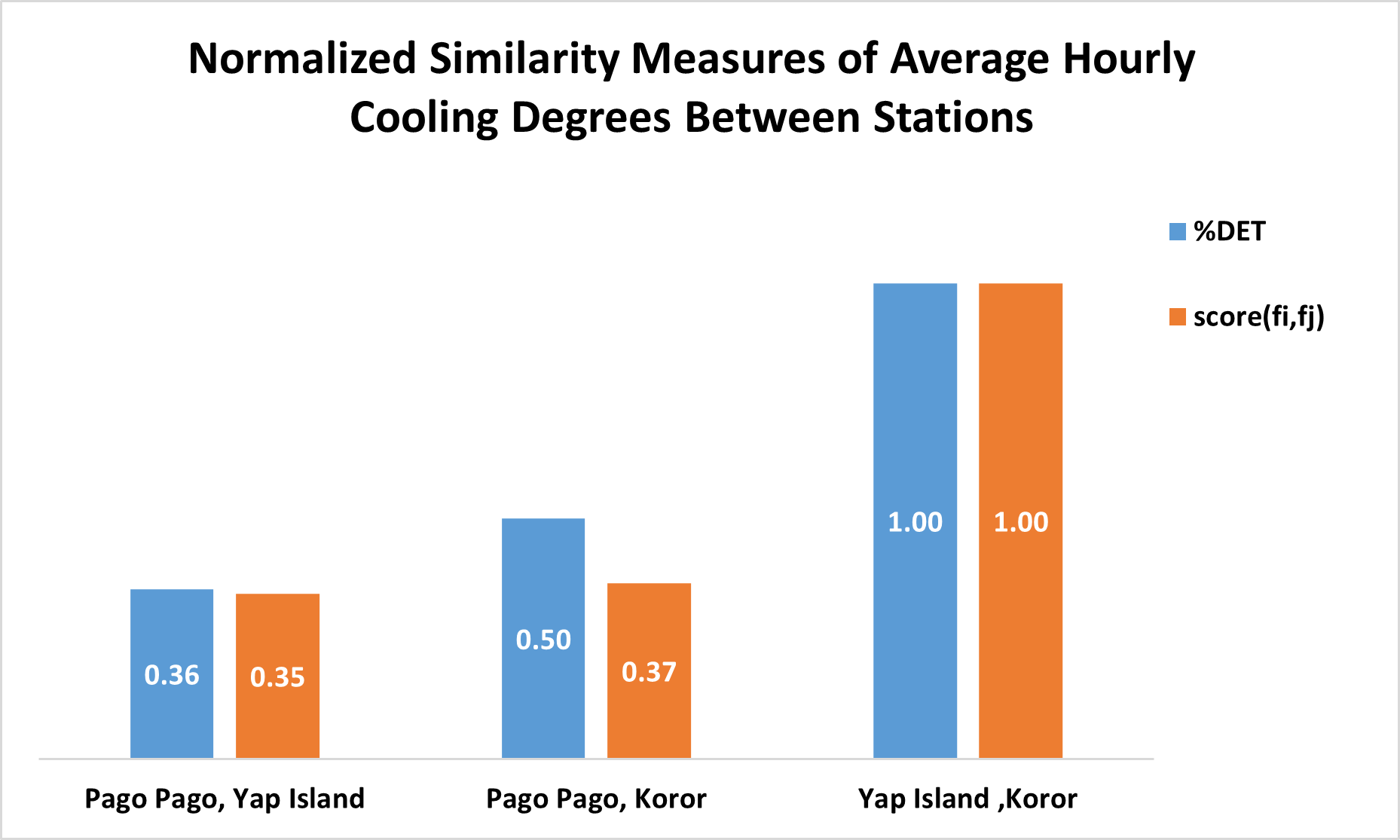}
    \includegraphics[width=0.45\textwidth]{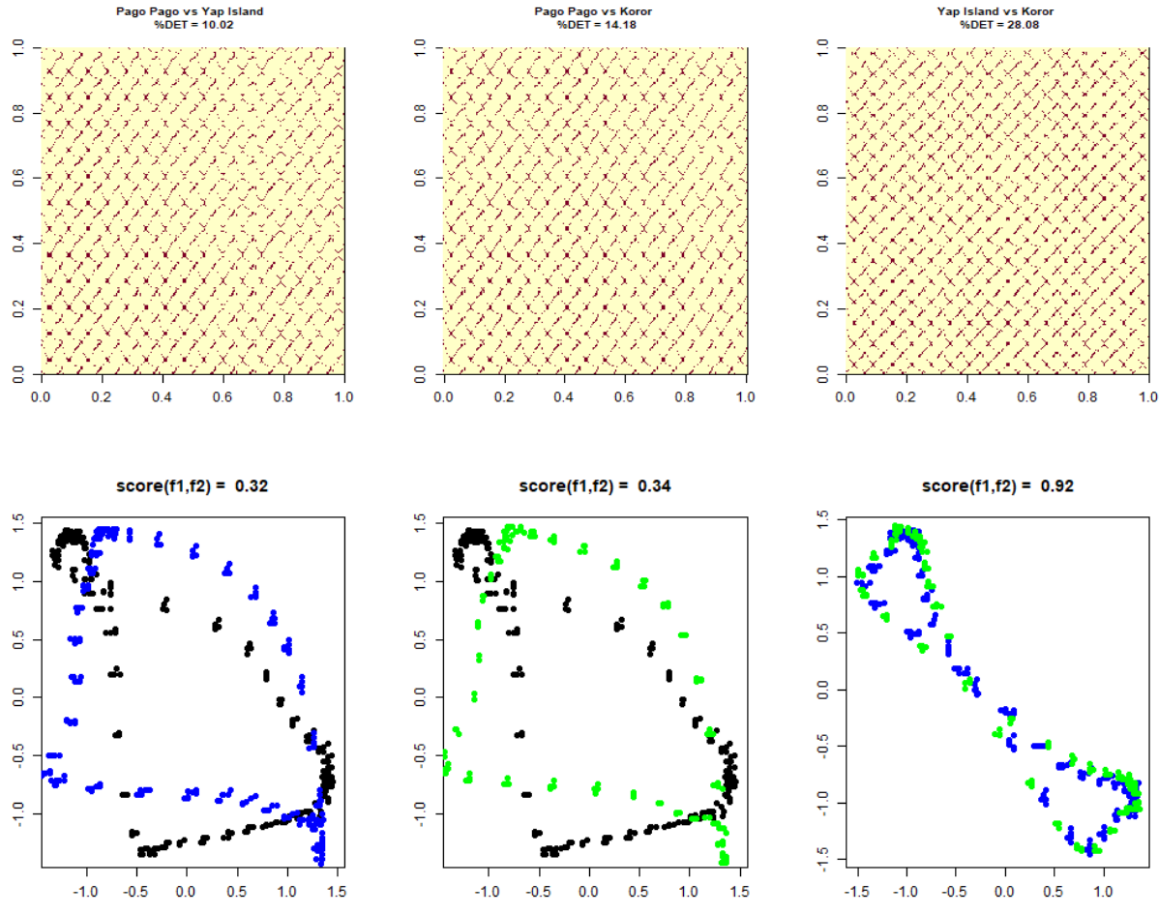}
    \caption{The similarity estimates of our biconditional periodicity score and percent determinism. 
    We show the normalized measures of \%DET in blue and $\score(f_1,f_2)$ on the left for each pair of time series.
    On the right we show the cross-recurrence matrices for each pair (top) and the corresponding conditional sliding windows embeddings (bottom).
    We fix the embedding dimension to $M = 1$ for both measures and color the conditional embeddings according to the series being embedded (i.e. the green embedding in the middle column corresponds to the conditional embedding of Koror given Pago).
    To compute \%DET, we fix the distance threshold to be 0.105\% of the maximum pairwise distance between both embeddings (i.e. $c = 0.00105$) and $\minDL = 9$.
    We obtain these values by retaining a stable range of values for \%DET among $c = \{0.00104,0.00105,0.00106\}$ and $\minDL = \{8,9,10\}$.
    We finally fix the time lag for \%DET by first identifying the first minimum of automutual information of each series and taking the maximum to be $\tau = 5$.}
    \label{fig:results_correlation_climates}
\end{figure}

We plot the normalized bi-conditional periodicity scores in orange and the normalized \%DET measures in blue. 
We can again see that both measures detect greater similarity in cooling degrees over time between the closer stations of Yap Island and Koror. 
In the right hand side of Figure \ref{fig:results_correlation_climates}, we show the cross-recurrence matrices and \%DET values for each pair of time series being compared (top), along with the corresponding conditional sliding windows embeddings used to compute the bi-conditional periodicity scores (bottom).
For example, the \%DET estimated when comparing the time series for Pago Pago and Yap Island is 10.04\%, obtained from the cross-recurrence matrix on the left.
The conditional sliding windows embeddings of Pago Pago given Yap and Yap given Pago Pago are shown directly below this matrix in black and blue, respectively, whose conditional periodicity scores are used to obtain the bi-conditional periodicity score of 0.32.
The normalized measures of \%DET and $\score(f_\text{Pago},f_\text{Yap})$ are then given as 0.36 and 0.35 in the left cluster of bars in the bar plot.

We again wish to highlight that both measures follow our intuition - since Pago Pago and Yap Island are stations that are closer together geographically, we naturally expect the behavior of their average cooling degrees over time to be more similar.
This greater similarity is highlighted in Figure \ref{fig:results_correlation_climates}, as both percent determinism and our bi-conditional periodicity score are largest when comparing these two time series. 

\subsubsection{Comparing Stability of Similarity Estimates on Climate Data}

We finally compare the stability of the bi-conditional periodicity score to percent determinism when adding increasing levels of Gaussian noise to each time series.
Here, we vary the standard deviation of Gaussian noise between the values $\sigma = \{0,0.01,\dots,0.09\}$ and report the average (normalized) measure over 30 samples of time series pairs at each noise level.
For example, we define the time series of Pago Pago at noise level $\sigma$ as 
$f_\text{Pago}^\sigma(t) = f_\text{Pago}(t) + \mathcal{N}(0,\sigma \cdot \text{sd}(f_\text{Pago}))$, where $\text{sd}(f_\text{Pago})$ denotes the standard deviation of the original time series.
We fix all parameters for both measures to those used in the previous experiment.
\label{fig:stability_results_climate_data}
See our results in Figure \ref{fig:stability_results_climate_data}.
As shown, for small amounts of added noise, our measure remains much more stable. 
Our bi-conditional periodicity score stays within 10\% of its original value in all three time-series comparisons for up to 10\% noise, while \%DET drops to less than 50\% of its original value for only 3\% noise.

\begin{figure}[ht!]
\centering
\includegraphics[width=0.65\textwidth]{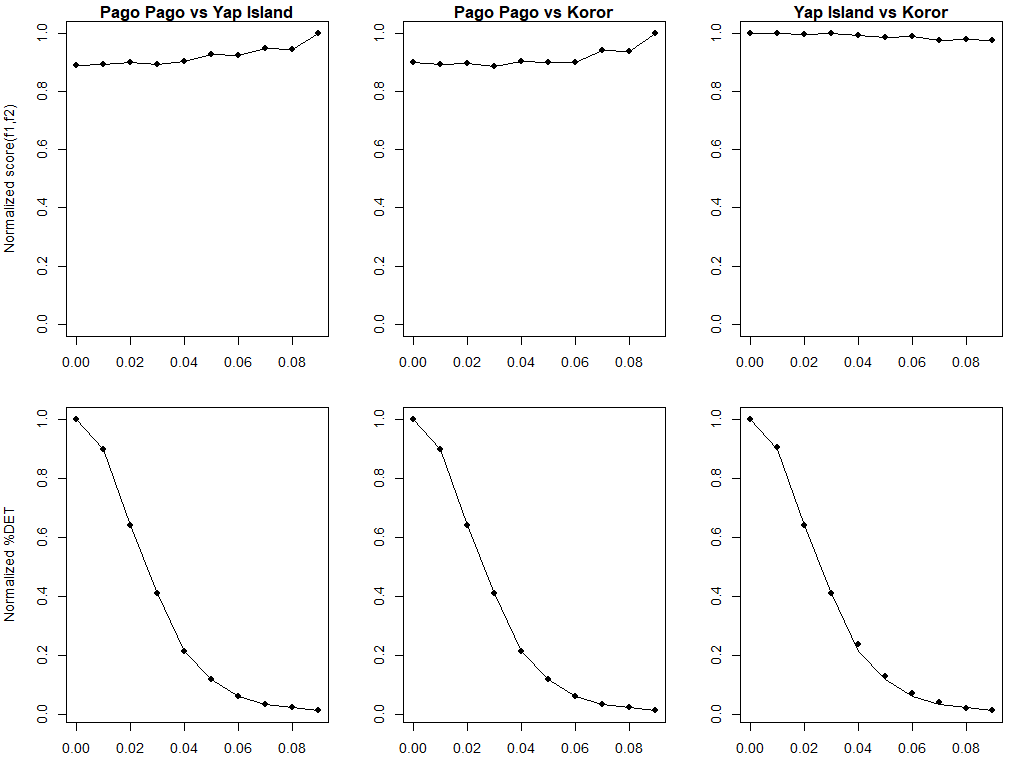}
\caption{The normalized average bi-conditional periodicity score (top) and percent determinism (bottom) for three pairs of time series containing average cooling degrees per hour against added levels of Gaussian noise.}
\label{fig:stability_results_climate_data}
\end{figure}

Notice that the conditional embeddings of Pago given Yap and Pago Given Koror have more clustered points (see the bottom right images of Figure \ref{fig:results_correlation_climates}).
Hence, adding small amounts of noise will spread them out more evenly, causing the conditional embeddings to be more rounded and the max persistence to increase (i.e. the score will increase).
This is why we see a small increase in the bi-conditional score in the top row of Figure \ref{fig:stability_results_climate_data}.
Oppositely, when comparing Yap and Koror, both pairs of conditional embeddings are much skinnier, hence adding small amounts of noise to their points will fill in the hole that is already there, causing a subtle decrease in the max persistence and hence the score.

\section{Discussion and Future Work}\label{sec:discussion_future}

In this work, we are successfully able to introduce a novel persistence-based measure of time-series similarity with both stability and parametric advantages to a previously defined method.
We are not only able to prove theoretical stability of our measure in comparison to percent determinism, but we are also able to experimentally verify this greater stability on synthetic and real time series data.
While this work is novel and our measure avoids the limitations posed by percent determinism, we are still interested in continuing this work by extending our measure to higher homology dimensions.
As well, we are interested in constructing a measure of similarity that uses a cross-recurrence matrix that measures correlation between the 0- (or higher-) dimensional Betti curves from filtration on a pair of time series.
As 0-dimensional Betti-curves are stable to small time-series perturbations, we are further motivated to try this approach as an alternatively stable measure of time-series similarity to percent determinism.

\bibliography{timeseries,homology,DimRedn,Refs_Prot,dynamics}

\end{document}